\theoremstyle{definition}
\newtheorem{mydef}{Definition}[section]
\newtheorem{myque}[mydef]{Question}
\newtheorem*{myack}{Acknowledgements}
\theoremstyle{remark}
\theoremstyle{plain}
\newtheorem{mycol}[mydef]{Corollary}
\newtheorem{mysen}[mydef]{Theorem}
\newtheorem{mythm}{Theorem}
\newtheorem{mylem}[mydef]{Lemma}
\newtheorem{mypro}[mydef]{Proposition}
\newtheorem{myfact}[mydef]{Fact}
\newtheorem*{myclaim}{Claim}
\numberwithin{mydef}{section}
\DeclareMathOperator{\cof}{cof}
\DeclareMathOperator{\dom}{dom}
\DeclareMathOperator{\ZFC}{ZFC}
\DeclareMathOperator{\otp}{otp}
\DeclareMathOperator{\cf}{cf}
\DeclareMathOperator{\Add}{Add}
\DeclareMathOperator{\AP}{AP}
\DeclareMathOperator{\Coll}{Coll}
\DeclareMathOperator{\DSS}{DSS}
\DeclareMathOperator{\MM}{MM}
\DeclareMathOperator{\IA}{IA}
\DeclareMathOperator{\cl}{cl}
\DeclareMathOperator{\On}{On}
\newcommand{\dM}{\mathbb{M}}
\newcommand{\dP}{\mathbb{P}}
\newcommand{\dQ}{\mathbb{Q}}
\newcommand{\dT}{\mathbb{T}}
\newcommand{\uhr}{\upharpoonright}
\title{Cascading Variants of Internal Approachability} 
\author{Hannes Jakob} 
\subjclass[2020]{03E05, 03E35, 03E55} 
\date{\today}
\begin{document}
	
	
	\keywords{Mitchell Forcing, Internally Approachable, Martin's Maximum} 
	
	
	\begin{abstract}
		We construct models in which there are stationarily many structures that exhibit different variants of internal approachability at different levels. This answers a question of Foreman-Todorcevic. We also show that the approachability property at $\mu$ is consistent with having a distinction between variants of internal approachability for stationarily many $N\in[H(\mu^+)]^{\mu}$, answering a question of Levine.
	\end{abstract}
	
	\maketitle
	
	\section*{Introduction}
	
	The variants of internal approachability were introduced by Foreman and Todorcevic in \cite{ForemanTodorLowenheimSkolem}. We say that a set $N$ of size $\mu$ ($\mu$ regular uncountable) is
	\begin{enumerate}
		\item \emph{internally unbounded} if $[N]^{<\mu}\cap N$ is unbounded in $[N]^{<\mu}$,
		\item \emph{internally stationary} if $[N]^{<\mu}\cap N$ is stationary in $[N]^{<\mu}$,
		\item \emph{internally club} if $[N]^{<\mu}\cap N$ contains a club in $[N]^{<\mu}$,
		\item \emph{internally approachable} if there is a $\subseteq$-increasing and continuous sequence $(a_i)_{i\in\mu}$ of elements of $[N]^{<\mu}$ such that $\bigcup_{i\in\mu}a_i=N$ and $(a_i)_{i<j}\in N$ for every $j<\mu$.
	\end{enumerate}
	
	Clearly, these properties are ordered in ascending strength (at least for sets $N$ which are elementary submodels of some $H(\Theta)$). Previously, Krueger showed (see  \cite{KruegerIntApproachAndRefl, KruegerIntClubApproachLarge, KruegerApplicMSI}) that it is relatively consistent that any two properties are distinct for stationarily many $N\in[H(\Theta)]^{\mu}$. For the distinction between internal unboundedness and stationarity, this is only known for $\mu=\omega_1$ while the other properties can also be separated for larger $\mu$. More recently, the author and Levine (see \cite{JakobDisjointInterval, JakobLevineICIAInterval, LevineDisjointStatSeq}) extended Kruegers result to obtain stationarily many $N\in[H(\Theta)]^{\mu}$ which are internally stationary but not internally club or internally club but not internally approachable for all $\mu=\aleph_{n+1}$ ($n\in\omega$) and $\Theta>\mu$ simultaneously.
	
	In this paper, we will focus on a different direction of research pertaining to the following observation: Whenever $M\prec H(\Theta)$ has size $\mu$ and is internally unbounded (stationary; club; approachable), the same is true for $M\cap H(\Theta')$ whenever $\Theta'\in M$. This raises the natural question of whether the converse is true as well , which was first asked by Foreman and Todorcevic in \cite{ForemanTodorLowenheimSkolem} (see also \cite[Question 4.6]{ForemanSurvey}): ``Suppose $\kappa$ is regular, $N\prec H(\Theta)$ and $N\cap[N\cap\kappa]^{\aleph_0}$ is stationary. Must $N\cap[N\cap\kappa^+]^{\aleph_0}$ be stationary?''. We will answer their question in the negative by showing:
	
	\begin{mythm}\label{Thm1}
		Assume $\MM$ and $2^{\omega_2}=\omega_3$. There exist stationarily many $N\in[H(\omega_3)]^{\omega_1}$ such that $N\cap H(\omega_2)$ is internally approachable and $N$ is not internally stationary.
	\end{mythm}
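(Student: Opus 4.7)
The natural strategy under $\MM$ is to combine two complementary structural consequences: the strong reflection available at the $H(\omega_2)$ level (providing IA below) and the failure of approachability-type principles at $\omega_3$ (providing non-IS above). Fix an enumeration $\vec{b}=\langle b_\alpha:\alpha<\omega_3\rangle$ of $[\omega_3]^{<\omega_1}$ and, using failure of approachability at $\omega_3$ under $\MM$, fix a stationary set $S\subseteq\omega_3\cap\cf(\omega_1)$ of ordinals $\delta$ for which there is no cofinal $\omega_1$-sequence $(\delta_i)_{i<\omega_1}$ in $\delta$ with $\{\delta_j:j<i\}=b_{\alpha_i}$ for some $\alpha_i<\delta$ and each $i<\omega_1$. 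Fix also a stationary $T\subseteq\omega_2\cap\cf(\omega_1)$ of approachable points of $\omega_2$ together with an approach sequence for each $\gamma\in T$.

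For each $\delta\in S$ and appropriate $\gamma\in T$, construct $N=N_{\delta,\gamma}\in[H(\omega_3)]^{\omega_1}$ with $\omega_1\subseteq N$, $\sup(N\cap\omega_3)=\delta$, $\sup(N\cap\omega_2)=\gamma$, and containing $\vec{b}$ together with the relevant Skolem functions. From the approachability of $\gamma$ inside $N$, extract by elementarity an IA chain $(a_i)_{i<\omega_1}$ in $[N\cap H(\omega_2)]^{<\omega_1}$: each $a_i$ is the Skolem hull in $H(\omega_2)$ of an initial segment of $\gamma$'s approach sequence, and by continuity plus closure of $N$ the initial segments $(a_j)_{j<k}$ lie in $N$. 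This witnesses that $N\cap H(\omega_2)$ is internally approachable.

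To see that $N\cap H(\omega_3)$ is not internally stationary, suppose for contradiction that $[N]^{<\omega_1}\cap N$ is stationary in $[N]^{<\omega_1}$. The projection under $x\mapsto x\cap\omega_3$ is then stationary in $[N\cap\omega_3]^{<\omega_1}$, and each projected element is of the form $b_\alpha$ for some $\alpha\in N\cap\omega_3$, hence $\alpha<\delta$. Using the IA chain of the previous step as a continuous skeleton and a Fodor-style extraction, one aims to produce $i\mapsto x_i\in N\cap[N]^{<\omega_1}$ with $x_i\cap\omega_3=b_{\alpha_i}$, $\alpha_i<\delta$, and a cofinal $\langle\delta_i:i<\omega_1\rangle$ in $\delta$ satisfying $\{\delta_j:j<i\}=b_{\alpha_i}$, which contradicts $\delta\in S$. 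Stationarity of the resulting collection in $[H(\omega_3)]^{\omega_1}$ then follows from the stationarity of $S$ and $T$ by standard interleaving of witnesses.

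The main obstacle is the extraction in the previous paragraph: converting the stationarity of $[N]^{<\omega_1}\cap N$, combined with an IA chain at $H(\omega_2)$, into a $\vec{b}$-approach sequence for $\delta$ at the $\omega_3$ level. The IA chain supplies only a continuous skeleton confined to $H(\omega_2)$, while stationarity supplies arbitrarily large $\omega_3$-projections, and coordinating them into a single approach sequence requires a Fodor-type reflection that genuinely exploits the interplay between the two cardinal levels. This cascading interaction, rather than any isolated consequence of $\MM$, is the technical heart of the proof.
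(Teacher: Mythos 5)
Your plan has a genuine gap at its core, and in fact the route through the approachability ideal at $\omega_3$ cannot work, for two separate reasons. First, the stationary set $S\subseteq\omega_3\cap\cof(\omega_1)$ of non-approachable points that you want to extract from ``failure of approachability at $\omega_3$'' does not exist: by a ZFC theorem of Shelah (the same one invoked in Section 3 of this paper), $\mu^+\cap\cof({<}\mu)\in I[\mu^+]$ for regular $\mu$, so with $\mu=\omega_2$ every point of $\omega_3$ of cofinality $\omega$ or $\omega_1$ is approachable modulo a club. Since $\sup(N\cap\omega_3)$ has cofinality at most $\omega_1$ for $N\in[H(\omega_3)]^{\omega_1}$, any failure of $\AP_{\omega_2}$ is witnessed only at points of cofinality $\omega_2$ and is invisible at the tops of the structures you are building. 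Second, even granting such an $S$, non-approachability of $\delta=\sup(N\cap\omega_3)$ could at best rule out \emph{internal approachability} (or, with more work, internal clubness) of $N\cap H(\omega_3)$; it cannot rule out internal stationarity. The step you yourself flag as the ``technical heart'' --- converting stationarity of $[N]^{<\omega_1}\cap N$ into a continuous cofinal $\vec{b}$-approach chain for $\delta$ --- is not a missing computation but a false implication: a stationary subset of $[N\cap\omega_3]^{<\omega_1}$ need not contain, nor generate by Fodor-style pressing down, any increasing continuous cofinal chain, and Krueger's models of structures that are internally stationary but not internally club show precisely that this passage fails in general.

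The paper's proof is of a completely different nature. It uses Woodin's reformulation of $\MM$ applied to the stationary-set-preserving poset $\Coll(\omega_1,\omega_2)*\Add(\omega)*\dP([\omega_3]^{<\omega_1}\smallsetminus V)$, which forces both an internally approachable chain covering $H(\omega_2)^V$ and a continuous chain $(b_i)_{i<\omega_1}$ covering $H(\omega_3)^V$ all of whose entries are \emph{fresh}, i.e.\ not in $V$. For an $N$ admitting a filter generic over it, the partial evaluations of these names give, on the one hand, an IA chain for $N\cap H(\omega_2)$ and, on the other, a club in $[N\cap H(\omega_3)]^{<\omega_1}$ each member of which is separated from every element of $N$ by a density argument; a club disjoint from $[N]^{<\omega_1}\cap N$ is what kills internal stationarity. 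If you want a non-forcing argument, you would need a reason for a \emph{club} of countable subsets of $N\cap H(\omega_3)$ to avoid $N$, and the approachability ideal does not supply one.
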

	
	So in particular, assuming $\MM$ and $2^{\omega_2}=\omega_3$, there exist stationarily many $N\prec H(\omega_3)$ with size $\omega_1$ such that $N\cap[N\cap\omega_2]^{<\omega_1}$ is stationary but $N\cap [N\cap\omega_3]^{<\omega_1}$ is not stationary.
	
	Using related techniques, we also obtain a similar result at cardinals above $\omega_1$:
	
	\begin{mythm}\label{Thm2}
		Assume $\kappa$ is $\kappa^+$-supercompact and $\mu<\kappa$ is regular. There is a forcing extension where $\kappa=\mu^+$ and there are stationarily many $N\in[H(\mu^{++})]^{\mu}$ such that $N\cap H(\mu^+)$ is internally approachable and $N$ is internally stationary but not internally club.
	\end{mythm}
	
	The construction used to obtain Theorem \ref{Thm2} can be modified to obtain a different result which answers another open problem: Under the assumption $2^{\mu}=\mu^+$ (in which case $|H(\mu^+)|=\mu^+$), distinctions between some variants of internal approachability are equivalent to the existences of certain combinatorial objects: Assuming $2^{\mu}=\mu^+$, Krueger showed in \cite[Theorem 6.5]{KruegerApplicMSI} that there is a \emph{disjoint stationary sequence on $\mu^+$} if and only if there are stationarily many $N\in[H(\mu^+)]^{\mu}$ which are internally unbounded but not internally club and a folklore theorem (see e.g. \cite[Lemma 1]{CoxFAAppSSR}) states that the \emph{approachability property fails at $\mu$}, i.e. $\mu^+\notin I[\mu^+]$, if and only if there are stationarily many $N\in[H(\mu^+)]^{\mu}$ which are internally unbounded but not internally approachable. We will show that in both theorems the assumption $2^{\mu}=\mu^+$ cannot be removed, answering a question of Levine (see \cite[Question 1]{LevineDisjointStatSeq}):
	
	\begin{mythm}\label{Thm3}
		Assume $\kappa$ is $\kappa^+$-supercompact and $\mu<\kappa$ is regular. There is a forcing extension where $\kappa=2^{<\mu}=\mu^+$, $2^{\mu}=\mu^{++}$ and the following holds:
		\begin{enumerate}
			\item $\mu^+\in I[\mu^+]$ (so there does not exist a disjoint stationary sequence on $\mu^+$),
			\item there are stationarily many $N\in[H(\mu^+)]^{\mu}$ which are internally stationary but not internally club.
		\end{enumerate}
	\end{mythm}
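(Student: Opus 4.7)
The plan is to construct a new variant of Mitchell-style forcing $\dP$ over the ground model, along the same lines as the forcing used for Theorem~\ref{Thm2}, but modified so that the Cohen part has length $\kappa^+$ rather than $\kappa$. This pushes $2^\mu$ up to $\mu^{++}$ in the extension while retaining the collapsing structure that makes $\kappa=\mu^+$. The hypothesis $\tau^{<\tau}=\tau$ is used to index the collapse components so that the resulting quotient is well-behaved and so that generic filters produce stationarily many internally unbounded but not internally approachable sets.

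Concretely, I would take $\dP$ to be a two-step factorization $\Add(\mu,\kappa^+)\ast\dot{\dQ}$, where $\dot{\dQ}$ names an iteration with supports of size $<\mu$ of collapses $\Coll(\mu,\alpha)$ for inaccessibles $\alpha<\kappa$, each interpreted in the intermediate model given by an appropriate initial segment of the Cohen part. A term-forcing argument should show that $\dot{\dQ}$ is forced to be $\mu^+$-distributive and that $\dP$ has the $\kappa$-chain condition, using $\mu^{<\mu}=\mu$ (which holds in the ground model by inaccessibility of $\kappa$). From this follow, by now-standard Mitchell-style analysis, $\kappa=\mu^+$, $2^\mu=\mu^{++}$, and $\mu^+\in I[\mu^+]$ in the extension, settling item (1) and the cardinal arithmetic.

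For item (2), I would show that any candidate disjoint stationary sequence in the extension is represented by a sufficiently coherent family of $\dP$-names indexed along $\kappa^+$, to which $\kappa^+$-ineffability applies to produce a stationary set of ``guesses'' on which the sequence is forced to fail disjointness. For item (3) the idea is that the $\kappa^+$ added Cohen subsets of $\mu$ provide, for every sufficiently elementary $N\in[H(\mu^+)]^{\mu}$, a cofinal family of bounded pieces lying in $N$, witnessing internal unboundedness; while any continuous approachable chain of length $\mu$ filtering $N$ would essentially code a single generic Cohen sequence into $N$, which is ruled out by the overabundance of Cohen subsets available outside $N$ (i.e.\ the fact that $2^\mu>\mu^+$).

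The main obstacle is the no-$\DSS$ argument under $2^\mu=\mu^{++}$. Standard Mitchell-theoretic proofs that no disjoint stationary sequence exists on $\kappa=\mu^+$ rely on $2^\mu=\mu^+$, which makes each candidate stationary subset of $[\alpha]^{<\mu}$ have a $\dP$-name of size at most $\kappa$, suitable for an ineffability argument at $\kappa$. Once $2^\mu=\mu^{++}$, this size analysis breaks down, and one needs the strictly stronger hypothesis that $\kappa$ is $\kappa^+$-ineffable together with a careful factoring of $\dP$ so that each relevant stationary name is captured by a factor of size at most $\kappa$. Once the no-$\DSS$ argument is in place, the remaining items should follow by adapting the arguments used for Theorem~\ref{Thm2}.
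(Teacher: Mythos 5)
Your overall instinct --- take the Theorem~\ref{Thm2} forcing and add $\kappa^+$ many Cohen subsets of $\mu$ to push $2^\mu$ up to $\mu^{++}$ --- matches the paper, but two structural choices in your write-up create genuine gaps. First, the paper forces with the \emph{product} $\dM(\tau,\mu,\kappa)\times\Add(\mu,\kappa^+)$, keeping the Mitchell poset exactly as in Theorem~\ref{Thm2}: the small $\Add(\tau)$ components interleaved below $\kappa$ are what give the approximation property for the tail forcing, and that approximation property is the engine of the non-approachability argument. Your version, which lengthens the Cohen part of the Mitchell construction itself to $\Add(\mu,\kappa^+)$ and then iterates collapses over it, discards exactly this feature; and your substitute argument for item (3) --- that an approachable chain would ``code a single generic Cohen sequence into $N$'', ruled out by the ``overabundance'' of Cohen subsets --- is not a proof. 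Indeed the conclusion you draw for ``every sufficiently elementary $N$'' cannot hold, since in any model there are stationarily many internally approachable $N\in[H(\mu^+)]^{\mu}$. The actual argument fixes a structure $M$ given by $\Pr(\kappa,\kappa^+)$ (which is where the $\kappa^+$-ineffability is spent) with $\nu=M\cap\kappa$ inaccessible, passes to its transitive collapse, and shows that an approachable filtration of $M[G]\cap H(\mu^+)$ would, via the approximation property of the pair $(V[H(\nu+1)\times K],V[G])$, put a surjection from $\mu$ onto $\nu^+$ into a $\nu^+$-cc extension; internal clubness (hence unboundedness) comes from the club-shooting coordinate of $\dM$ at $\nu^+$.

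Second, your treatment of item (2) inverts the paper's logic. You call the no-$\DSS$ claim ``the main obstacle'' and propose a $\kappa^+$-ineffability argument on coherent families of names for stationary sets. None of that is needed: Theorem~\ref{KruegerThm}(1) says, with no cardinal arithmetic hypothesis, that a disjoint stationary sequence on $\mu^+$ implies the failure of $\AP_{\mu}$; so once item (1) is established (exactly as in Theorem~\ref{Thm2Again}, and preserved by the $\mu^+$-preserving factor $\Add(\mu,\kappa^+)$), item (2) is immediate. The large-cardinal hypothesis is used only for item (3). The point of the theorem is precisely that items (2) and (3) coexist, which under $2^\mu=\mu^+$ would be impossible by Theorem~\ref{KruegerThm}(2) --- hence the need to make $2^\mu=\mu^{++}$, and hence why no extra work on item (2) is required once $\AP_{\mu}$ holds.
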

	
	The paper is organized as follows: In the first section, we introduce known definitions and results that will be used throughout the paper. In the second section, we prove Theorem \ref{Thm1}. In the third section, we recall Mitchell's forcing and its basic properties. In the fourth section, we prove a theorem which allows us to obtain both Theorem \ref{Thm2} and Theorem \ref{Thm3} as corollaries. We close with a few open questions.
	
	\begin{myack}
		The author wants to thank the anonymous referee to an earlier version of this paper for their diligent reading and immensely helpful referee report leading to an improvement of the manuscript.
	\end{myack}
	
	\section{Preliminaries}
	
	We assume the reader is familiar with the basics of forcing and the study of large cardinals. Good introductory material can be found in \cite{JechSetTheory}, \cite{KunenSetTheory} and \cite{KanamoriHigherInfinite}.
	
	We define the following forcing notions: For any regular cardinal $\delta$, $\Add(\delta)$ consists of ${<}\,\delta$-sized partial functions from $\delta$ to $2$, ordered by $\supseteq$. $\Coll(\mu,\delta)$ consists of ${<}\,\mu$-sized partial functions from $\mu$ to $\delta$, ordered by $\supseteq$. $\Add(\delta)$ is ${<}\,\delta$-closed, $(2^{<\delta})^+$-cc.\ and adds a new subset of $\delta$, while $\Coll(\mu,\delta)$ is ${<}\,\mu$-closed, $(\delta^{<\mu})^+$-cc and adds a surjection from $\mu$ onto $\delta$.
	
	Shelah introduced the \emph{approachability ideal} in \cite{ShelahApproachability} in order to obtain results regarding the preservation of stationary subsets of $\delta\cap\cof({<}\,\mu)$ by ${<}\,\mu$-closed forcing notions.
	
	\begin{mydef}
		Let $\mu$ be a cardinal.
		\begin{enumerate}
			\item The \emph{approachability ideal on $\mu^+$}, denoted by $I[\mu^+]$, is defined as follows: $A\in I[\mu^+]$ if there exists a sequence $(a_{\alpha})_{\alpha<\mu^+}$ of elements of $[\mu^+]^{<\mu}$ and a club $C\subseteq\mu^+$ such that whenever $\gamma\in A\cap C$, there exists $E\subseteq\gamma$ with $\otp(E)=\cf(\gamma)$ such that $E$ is unbounded in $\gamma$ and $\{E\cap\alpha\;|\;\alpha<\gamma\}\subseteq\{a_{\alpha}\;|\;\alpha<\gamma\}$ (we say that \emph{$\gamma$ is approachable with respect to $(a_{\alpha})_{\alpha<\mu^+}$}).
			\item The \emph{approachability property at $\mu$}, denoted by $\AP_{\mu}$, is the statement that $\mu^+\in I[\mu^+]$.
		\end{enumerate}
	\end{mydef}
	
	As has been shown by Shelah in \cite{ShelahApproachability}, for any regular cardinal $\mu$, $I[\mu^+]$ is a ${<}\,\mu^+$-complete normal ideal. Furthermore, Shelah showed in \cite[Lemma 4.4]{ShelahApproachability} that $\mu^+\cap\cof(<\mu)\in I[\mu^+]$ for any regular cardinal $\mu$, so that $\AP_{\mu}$ is equivalent to stating that $\mu^+\cap\cof(\mu)\in I[\mu^+]$.
	
	The approachability ideal relates to the variants of internal approachability as follows (see e.g. \cite[Lemma 1]{CoxFAAppSSR}):
	
	\begin{mysen}[Folklore]\label{CoxThm}
		Let $\mu$ be a regular uncountable cardinal with $2^{\mu}=\mu^+$. Then $\AP_{\mu}$ fails if and only if there are stationarily many $N\in[H(\mu^+)]^{\mu}$ which are internally unbounded but not internally approachable.
	\end{mysen}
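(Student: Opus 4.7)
The plan is to use $2^{\mu}=\mu^+$ to reduce the internal combinatorics of an elementary substructure $N\prec H(\mu^+)$ of size $\mu$ with $\mu\subseteq N$ to the structure of its supremum $\gamma_N:=\sup(N\cap\mu^+)<\mu^+$. Fix a bijection $\pi:\mu^+\to H(\mu^+)$ and an enumeration $\vec{a}=(a_\alpha)_{\alpha<\mu^+}$ of $[\mu^+]^{<\mu}$ (both exist since $2^{\mu}=\mu^+$), and restrict attention throughout to the stationary class of $N\in[H(\mu^+)]^{\mu}$ of size $\mu$ that contain $\pi$ and $\vec{a}$ and satisfy $\mu\subseteq N$.

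The heart of the proof is the following pointwise characterization: such an $N$ is internally approachable if and only if $N$ is internally unbounded and $\gamma_N$ is approachable with respect to $\vec{a}$. For the forward implication, given an internal continuous chain $(N_i)_{i<\mu}$ witnessing internal approachability, the values $\delta_i:=\sup(N_i\cap\mu^+)$ form, after thinning, a strictly increasing cofinal sequence in $\gamma_N$ of order type $\mu$; for each $j<\mu$, $(N_i)_{i<j}\in N$ together with $\vec{a}\in N$ forces $\{\delta_i:i<j\}=a_{\eta(j)}$ for some $\eta(j)\in N\cap\mu^+$, so $\eta(j)<\gamma_N$, and $E:=\{\delta_i:i<\mu\}$ witnesses approachability of $\gamma_N$. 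For the converse, starting from an internal unboundedness witness $(b_i)_{i<\mu}\subseteq N$ together with an approachability witness $E$ for $\gamma_N$, one forms internal Skolem hulls $M_i:=\mathrm{Sk}^{H(\mu^+)}(b_i\cup\{\pi,\vec{a}\})\in N$; using the indexing $\alpha\mapsto(\text{index of }E\cap\alpha\text{ in }\vec{a})$ provided by approachability, one thins $(b_i)$ to align with this indexing so that the initial segments $(M_i)_{i<j}$ also lie in $N$, witnessing internal approachability.

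Granted this characterization, the theorem follows. If $\AP_\mu$ fails, then the set $T\subseteq\mu^+$ of points of cofinality $\mu$ not $\vec{a}$-approachable is stationary; for each $\gamma\in T$, an external continuous chain of Skolem hulls $M_i\prec H(\mu^+)$ with $|M_i|<\mu$, $M_i\in M_{i+1}$ and $\sup_i\sup(M_i\cap\mu^+)=\gamma$ produces $N:=\bigcup_iM_i$ which is internally unbounded (since the $M_i$ lie in $N$ and are cofinal in $[N]^{<\mu}$) but, by the characterization, is not internally approachable; stationarity of the resulting family is routine by Fact~\ref{FactMenas}. Conversely, if stationarily many such $N$ are internally unbounded but not internally approachable, the forward direction of the characterization yields stationarily many non-approachable $\gamma_N<\mu^+$, so $\AP_\mu$ fails.

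The main obstacle is the converse direction of the characterization: one must convert an a priori external approachability witness for $\gamma_N$ into an internal $\IA$-chain inside $N$. This is where the assumption $2^{\mu}=\mu^+$ is essential, since it ensures $[\mu^+]^{<\mu}$ can be enumerated by ordinals below $\mu^+$, and so the approachability data at $\gamma_N$ becomes accessible through elements of $N\cap\mu^+$.
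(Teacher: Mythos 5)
The paper offers no proof of this statement---it is quoted as folklore with a citation---so I can only assess your argument on its own terms. Your architecture is the standard one and is essentially right: fix $\pi$ and an enumeration $\vec{a}$ of $[\mu^+]^{<\mu}$ (this is exactly where $2^{\mu}=\mu^+$ enters), show that for $N$ in a suitable club, $N$ is internally approachable iff $N$ is internally unbounded and $\gamma_N$ is approachable with respect to $\vec{a}$, and transfer stationarity between $[H(\mu^+)]^{\mu}$ and $\mu^+$. The ``only if'' half of your characterization is fine. The ``if'' half is vaguely worded (``thin $(b_i)$ to align''), but it can be done cleanly and in fact without the $b_i$ at all: set $M_j:=\mathrm{Sk}(E\cap\alpha_j\cup j\cup\{\pi,\vec{a}\})$, where $\alpha_j$ is the $j$-th element of $E$. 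Then $(M_i)_{i<j}$ is definable from $E\cap\alpha_j\in\{a_\beta\;|\;\beta<\gamma_N\}\subseteq N$ together with $j<\mu\subseteq N$ and the parameters, and $\bigcup_j M_j=N$ because each hull contains the least bijection of $\mu$ onto each $\alpha\in E\cap\alpha_j$ (hence contains $\alpha$ as a subset), while $N=\mathrm{Sk}(\gamma_N\cup\{\pi\})$. Internal unboundedness is genuinely needed here, but only to guarantee $\cf(\gamma_N)=\mu$, so that $E$ has order type $\mu$ and the chain has the right length.

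The genuine gap is in the left-to-right direction of the theorem, at ``stationarity of the resulting family is routine by Fact~\ref{FactMenas}.'' It is not: given a club $C\supseteq\cl_F$ in $[H(\mu^+)]^{\mu}$, you must produce $N\in C$ that is internally unbounded with $\gamma_N\in T$. If your small models $M_i$ are not closed under $F$, the union $N$ need not lie in $C$; if they are closed under $F$, then $M_i\cap\mu^+$ may contain ordinals above the prescribed $\gamma$, so $\sup(N\cap\mu^+)$ escapes $\gamma$ and you lose control of $\gamma_N$. Fact~\ref{FactMenas} only transfers clubs between index sets $[X]^{<\kappa}$ and does not address this interaction. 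The standard repair is to build a continuous chain $\langle M_\alpha\;|\;\alpha<\mu^+\rangle$ of $F$-closed models of size $\mu$ with $\mu\subseteq M_0$ and $M_\alpha\in M_{\alpha+1}$, let $D$ be the club of $\alpha$ with $M_\alpha\cap\mu^+=\alpha$, and take $N:=M_\gamma$ for $\gamma\in D\cap T$; such an $N$ is internally unbounded even though the pieces have size $\mu$, since any $x\in[N]^{<\mu}$ lies in some $M_\beta\in N$ with $\beta<\gamma$, and pulling back along a bijection $h\colon\mu\to M_\beta$ lying in $N$ and using regularity of $\mu$ yields a ${<}\,\mu$-sized $h[\xi]\in N$ covering $x$. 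Finally, you silently use Shelah's theorem that $\mu^+\cap\cof({<}\mu)\in I[\mu^+]$, together with the catch-up argument for the full enumeration $\vec{a}$, when asserting that the failure of $\AP_{\mu}$ makes $T$ (which consists only of cofinality-$\mu$ points) stationary; this should be said explicitly.
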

	
	Krueger introduced the notion of a \emph{disjoint stationary sequence} in \cite{KruegerApplicMSI}:
	
	\begin{mydef}
		Let $\mu$ be a regular uncountable cardinal. $\DSS(\mu^+)$ states that there exists a \emph{disjoint stationary sequence on $\mu^+$}, i.e. a sequence $(\mathcal{S}_{\alpha})_{\alpha\in S}$ such that the following holds:
		\begin{enumerate}
			\item $S\subseteq\mu^+\cap\cof(\mu)$ is stationary.
			\item For all $\alpha\in S$, $\mathcal{S}_{\alpha}$ is stationary in $[\alpha]^{<\mu}$ and for all $\alpha\neq\beta$, both in $S$, $\mathcal{S}_{\alpha}\cap\mathcal{S}_{\beta}=\emptyset$.
		\end{enumerate}
	\end{mydef}
	
	Krueger related the existence of a disjoint stationary sequence to the previous properties as follows (\cite[Corollary 3.7, Theorem 6.5]{KruegerApplicMSI}):
	
	\begin{mysen}[Krueger]\label{KruegerThm}
		Let $\mu$ be a regular cardinal.
		\begin{enumerate}
			\item If there exists a disjoint stationary sequence on $\mu^+$, $\AP_{\mu}$ fails.
			\item If $2^{\mu}=\mu^+$, then there exists a disjoint stationary sequence on $\mu^+$ if and only if there are stationarily many $N\in[H(\mu^+)]^{\mu}$ which are internally unbounded but not internally club.
		\end{enumerate}
	\end{mysen}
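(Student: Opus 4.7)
For part (1), I would aim for a contradiction. Suppose $(\mathcal{S}_\alpha)_{\alpha\in S}$ is a disjoint stationary sequence on $\mu^+$ and $\mu^+\in I[\mu^+]$ is witnessed by $(a_\beta)_{\beta<\mu^+}$ together with a club $C\subseteq\mu^+$. For each $\alpha\in S\cap C$ (so $\cf(\alpha)=\mu$), approachability supplies $E_\alpha\subseteq\alpha$ cofinal of order type $\mu$ whose proper initial segments appear among $\{a_\beta:\beta<\alpha\}$. After closing $(a_\beta)$ under finitary operations (which preserves $\AP_\mu$) so that the enumeration captures the internally approachable club generated by the initial segments of $E_\alpha$, one arranges that $\{a_\beta:\beta<\alpha\}$ contains a club $D_\alpha\subseteq[\alpha]^{<\mu}$. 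Stationarity of $\mathcal{S}_\alpha$ then gives $x_\alpha\in\mathcal{S}_\alpha\cap D_\alpha$; writing $x_\alpha=a_{g(\alpha)}$ with $g$ regressive, Fodor's lemma yields a stationary $T\subseteq S\cap C$ on which $g\equiv\beta_0$, so the single set $a_{\beta_0}$ belongs to $\mathcal{S}_\alpha$ for every $\alpha\in T$, contradicting disjointness.

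For part (2), the assumption $2^\mu=\mu^+$ gives $|H(\mu^+)|=\mu^+$, which via a fixed bijection identifies sufficiently elementary $N\in[H(\mu^+)]^\mu$ with their ordinal trace $N\cap\mu^+$; setting $\alpha_N:=\sup(N\cap\mu^+)$, on a club of $N$'s we have $\alpha_N\in\cof(\mu)$. For $(\Leftarrow)$, take a stationary $\mathcal{T}\subseteq[H(\mu^+)]^\mu$ of internally unbounded but not internally club $N$. Thin to $\mathcal{T}'$ so that $\{\alpha_N:N\in\mathcal{T}'\}$ projects into a stationary $S\subseteq\mu^+\cap\cof(\mu)$. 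For each $\alpha\in S$, define $\mathcal{S}_\alpha$ as the family of $[\alpha]^{<\mu}$-sets arising as suitable projections of the ``non-internal'' sub-$\mu$-sized subsets of $N$ (for any $N\in\mathcal{T}'$ with $\alpha_N=\alpha$) down to $\alpha$. Non-internal-clubness of $N$ yields stationarity of $\mathcal{S}_\alpha$ in $[\alpha]^{<\mu}$, because any club in $[\alpha]^{<\mu}$ disjoint from $\mathcal{S}_\alpha$ would, via the identification, pull back to a club of $[N]^{<\mu}$ contained in $N$, contradicting the assumption; disjointness of the family is automatic since each $\mathcal{S}_\alpha$ is indexed by its own $\alpha$.

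For $(\Rightarrow)$, given $(\mathcal{S}_\alpha)_{\alpha\in S}$, for each $\alpha\in S$ and each $x\in\mathcal{S}_\alpha$ construct $N=N_{\alpha,x}\prec H(\mu^+)$ of size $\mu$ with $\alpha_N=\alpha$ and $x$ realized naturally inside $N$, closing under Skolem functions and sub-$\mu$-sized unions; this yields a stationary family. Internal unboundedness follows from the closure. Non-internal-clubness is forced by disjointness: a club of $[N]^{<\mu}$ contained in $N$ would project to a club in $[\alpha]^{<\mu}$ concentrating on $\mathcal{S}_\alpha$, and an elementary pullback applied to a different $\alpha'\in S$ would shift a member of this club into $\mathcal{S}_{\alpha'}$, contradicting disjointness. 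The main obstacle in part (2) is pinning down the precise definitions of $\mathcal{S}_\alpha$ and $N_{\alpha,x}$ so that the dictionary ``\emph{non-internal-club} $\leftrightarrow$ \emph{stationary}'' goes through cleanly in both directions; part (1) avoids this machinery by reading the contradiction directly off the approachability sequence via Fodor.
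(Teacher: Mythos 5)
The paper does not actually prove this statement: it is quoted as a known theorem of Krueger (from the cited papers on thin stationary sets and internal approachability), so there is no in-paper argument to compare against. Judged on its own, your part (1) has the right overall shape --- extract from each $\alpha\in S\cap C$ a club $D_\alpha\subseteq[\alpha]^{<\mu}$ listed by the approachability sequence below $\alpha$, meet it with $\mathcal{S}_\alpha$, and press down with Fodor to violate disjointness --- but the step you wave at is the whole content of the direction. The initial segments of $E_\alpha$ form a continuous chain whose union is $E_\alpha$, a set of order type $\mu$, \emph{not} a club in $[\alpha]^{<\mu}$; and ``closing under finitary operations'' does not turn a merely cofinal family into one containing a club (closure under unions of ${<}\,\mu$-chains is the issue). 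The standard repair is to fatten: fix a structure $\mathfrak{A}=(H(\mu^+),\in,<^*,\vec{a})$ and set $x_k:=\mathrm{Sk}^{\mathfrak{A}}((E_\alpha\cap\gamma_k)\cup k)\cap\mu^+$; this chain is increasing and continuous, its union equals $\alpha$ only for $\alpha$ in a club of ordinals closed under the Skolem functions, and one must re-enumerate so that the $x_k$ appear among $\{a_\beta\;|\;\beta<\alpha\}$. All of this is true and standard, but it needs to be said; as written the claim that $\{a_\beta\;|\;\beta<\alpha\}$ contains a club of $[\alpha]^{<\mu}$ is unsupported (note that an arbitrary enumeration of cofinally many sets below $\alpha$ does \emph{not} have this property, otherwise part (1) would refute the existence of any disjoint stationary sequence outright).

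Part (2) contains a genuine error: disjointness of the $\mathcal{S}_\alpha$ is emphatically \emph{not} ``automatic since each $\mathcal{S}_\alpha$ is indexed by its own $\alpha$.'' For $\alpha<\alpha'$ one has $[\alpha]^{<\mu}\subseteq[\alpha']^{<\mu}$, so the naive choice $\mathcal{S}_\alpha:=[\alpha]^{<\mu}\smallsetminus N_\alpha$ produces families that can intersect, and you cannot thin them apart by demanding $\sup(x)$ be large, since stationarily many $\alpha'\in S$ are limits of $S$. Securing disjointness is the main technical device of Krueger's proof: one chooses the witnessing models along an increasing continuous chain $(M_i)_{i<\mu^+}$ of elementary submodels and arranges $\mathcal{S}_{\alpha_i}\subseteq M_{i+1}\smallsetminus M_i$ (so that for $i<j$, $\mathcal{S}_{\alpha_i}\subseteq M_j$ while $\mathcal{S}_{\alpha_j}\cap M_j=\emptyset$), which is where $2^\mu=\mu^+$ enters, via the identification of $N$ with $N\cap\mu^+$ through a fixed bijection and via the fact that the chain is club in $[H(\mu^+)]^{\mu}$. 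Your $(\Rightarrow)$ direction is also not an argument as stated: the contradiction does not come from an ``elementary pullback'' between two ordinals $\alpha,\alpha'$, but from the same Fodor-plus-counting scheme as in part (1): if cofinally many $M_i$ along the chain with $M_i\cap\mu^+\in S$ were internally club, each would contain some $x_i\in\mathcal{S}_{M_i\cap\mu^+}$, pressing down puts $\mu^+$ many pairwise distinct $x_i$ (distinct by disjointness) into a single $M_{j_0}$ of size $\mu$. Moreover, stationarity of the resulting family of $N$'s is not addressed; one must start from an arbitrary club of $[H(\mu^+)]^{\mu}$ and build the chain inside it, rather than constructing models $N_{\alpha,x}$ ad hoc and asserting the collection is stationary.
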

	
	For the preservation of stationarity in $[X]^{<\mu}$, we use the notion of \emph{weak internal approachability} (see \cite[Definition 2.1]{ForemanMagidorLCCounterexamples}):
	
	\begin{mydef}
		Let $\Theta$ be a cardinal and $N\prec H(\Theta)$, we say that $N$ is \emph{weakly internally approachable of length $\tau$} and write $N\in\IA(\tau)$ if there is a $\subseteq$-increasing and continuous sequence $(N_i)_{i<\tau}$ with $N=\bigcup_{i<\tau}N_i$ and $(N_i)_{i<j}\in N$ for every $j<\tau$.
	\end{mydef}
	
	This notion is enough to ensure stationary sets are preserved by sufficiently closed forcing notions (see the combination of Lemma 2.2 and Lemma 2.4 in \cite{KruegerApplicMSI}):
	
	\begin{myfact}[Krueger]\label{FactKrueger2}
		Assume $S\subseteq[H(\Theta)]^{<\mu}$ is stationary and $S\subseteq\IA(\tau)$ for some regular cardinals $\tau<\mu<\Theta$. If $\dP$ is ${<}\,\mu$-closed and $G$ is $\dP$-generic, $S$ is stationary in $[H(\Theta)^V]^{<\mu}$ in $V[G]$.
	\end{myfact}
	
	We will also make use of the following results due to Menas (see \cite{MenasStrongCSuperC} or \cite[Lemma 8.26, Theorem 8.27]{JechSetTheory} for a more modern source):
	
	\begin{myfact}[Menas]\label{FactMenas}
		Assume $X\subseteq Y\subseteq Z$ are sets and $\mu$ is a regular cardinal. For $A\subseteq [Y]^{<\mu}$, define $A\uhr X:=\{a\cap X\;|\; a\in A\}$ and $A\uparrow Z:=\{a\in[Z]^{<\mu}\;|\;a\cap Y\in A\}$.
		
		\begin{enumerate}
			\item If $C\subseteq[Y]^{<\mu}$ is a club, there is a function $F\colon[Y]^{<\omega}\to[Y]^{<\mu}$ such that
			$$\cl_F:=\{a\in[Y]^{<\mu}\;|\;\forall y\in[a]^{<\omega}\;F(y)\subseteq a\}\subseteq C$$
			and for any $F\colon[Y]^{<\omega}\to[Y]^{<\mu}$, $\cl_F$ is club in $[Y]^{<\mu}$.
			\item If $C\subseteq[Y]^{<\mu}$ is club in $[Y]^{<\mu}$, then $C\uhr X$ contains a club in $[X]^{<\mu}$ and $C\uparrow Z$ is club in $[Z]^{<\mu}$.
			\item If $S\subseteq[Y]^{<\mu}$ is stationary in $[Y]^{<\mu}$, then $S\uhr X$ is stationary in $[X]^{<\mu}$ and $S\uparrow Z$ is stationary in $[Z]^{<\mu}$.
		\end{enumerate}
	\end{myfact}
	
	In order to obtain models which are internally unbounded but not internally stationary (or internally stationary but not internally club), we use a construction due to Krueger: Refining a theorem of Gitik from \cite{GitikNonsplit}, Krueger showed the following (see \cite[Theorem 7.1]{KruegerApplicMSI}):
	
	\begin{myfact}[Krueger]\label{FactKrueger}
		Suppose $V\subseteq W$ are models of $\ZFC$ with the same ordinals and there is a real in $W\smallsetminus V$. Let $\mu$ be a regular uncountable cardinal in $W$ and let $X$ be a set in $V$ such that $(\mu^+)^W\subseteq X$. In $W$, let $\Theta\geq\mu^+$ be a regular cardinal such that $X\subseteq H(\Theta)^W$. Then in $W$ the set of all $N\in[H(\Theta)^W]^{<\mu}\cap\IA(\omega)$ with $N\cap X\notin V$ is stationary in $[H(\Theta)^W]^{<\mu}$.
	\end{myfact}
	
	By combining this with Fact \ref{FactKrueger2} and Fact \ref{FactMenas} we can see that e.g. in  $V[\Add(\omega)]$, $[\omega_2]^{<\omega_1}$ naturally splits into two disjoint stationary sets, namely $[\omega_2]^{<\omega_1}\cap V$ and $[\omega_2]^{<\omega_1}\smallsetminus V$, whose stationarity is preserved by countably closed forcing. So if we want to collapse $\omega_2$ to $\omega_1$ we can either use $\Coll(\omega_1,\omega_2)$ and preserve the stationarity of both sets or shoot a club through one of them. This is done using the following poset (see e.g.\cite[Definition 2.5]{KruegerIntClubApproachLarge}):
	
	\begin{mydef}
		Let $\mu$ be a regular uncountable cardinal and $X$ a set. Let $S\subseteq[X]^{<\mu}$ be stationary. $\dP(S)$ consists of functions $p\colon\alpha\to S$ such that $\alpha<\mu$ is a successor ordinal and $p$ is $\subseteq$-increasing and continuous, ordered by end-extension.
	\end{mydef}
	
	It is easy to see that $\dP(S)$ collapses $|X|$ to $|\mu|$. Under additional assumptions, $\dP(S)$ is ${<}\,\mu$-distributive and thus preserves all cardinals up to (and including) $\mu$.
	
	As is common for variants of Mitchell Forcing, we will use a projection analysis.
	
	\begin{mydef}
		Let $(\dP,\leq_{\dP})$ and $(\dQ,\leq_{\dQ})$ be posets. Then $\pi\colon\dP\to\dQ$ is a \emph{projection} if the following holds:
		\begin{enumerate}
			\item $\pi(1_{\dP})=1_{\dQ}$.
			\item If $p'\leq_{\dP} p$, $\pi(p')\leq_{\dQ}\pi(p)$.
			\item If $q\leq_{\dQ}\pi(p)$, there is $p'\leq_{\dP} p$ with $\pi(p')\leq_{\dQ} q$.
		\end{enumerate}
	\end{mydef}
	
	Projections are used for the following result (which is part of the folklore):
	
	\begin{myfact}\label{FactProjection}
		Assume $\dP$ and $\dQ$ are posets and $\pi\colon\dP\to\dQ$ is a projection. Let $H$ be $\dQ$-generic. In $V[H]$, let $\dP/H$ consist of all those $p\in\dP$ such that $\pi(p)\in H$. Then whenever $G$ is $\dP/H$-generic over $V[H]$, $G$ is $\dP$-generic over $V$ and $V[H][G]=V[G]$. Ergo, $V[G]$ is an extension of $V[H]$ using the poset $\dP/H$.
	\end{myfact}
	
	The predominant application of this fact is as follows: Suppose that $\pi\colon\dP\to\dQ$ is a projection and $\dP$ does not collapse certain cardinals. Then $\dQ$ also does not collapse the same cardinals, since being a cardinal is upwards absolute and any forcing extension by $\dQ$ is contained in a forcing extension by $\dP$.
	
	\section{An Application of Martin's Maximum}
	
	In this section, we show that $\MM$ implies that there are stationarily many structures which are ``cascadingly internally approachable''. Recall that $\MM$ (which was introduced in \cite{ForemanMagidorShelahMM}) states that whenever $\dP$ preserves stationary subsets of $\omega_1$ and $(D_{\alpha})_{\alpha<\omega_1}$ is a sequence of dense subsets of $\dP$, there is a filter $G\subseteq\dP$ which intersects every $D_{\alpha}$. We will use the following reformulation due to Woodin (see \cite{WoodinADFANS}, in the proof of Theorem 2.53): Recall that for a poset $\dP$ and a set $N$ we say that $G$ is \emph{$\dP$-generic over $N$} if for every $D\subseteq\dP$ which is an element of $N$ and dense in $\dP$ there is $p\in G\cap D\cap N$.
	
	\begin{myfact}[Woodin]\label{FactWoodin}
		Assume $\MM$. Let $\Theta$ be a regular cardinal. Whenever $\dP\in H(\Theta)$ preserves stationary subsets of $\omega_1$ there are stationarily many $N\in[H(\Theta)]^{<\omega_2}$ such that there exists a $\dP$-generic filter over $N$.
	\end{myfact}
	
	Define the following forcing notion:
	
	$$\dP:=\Coll(\omega_1,\omega_2)*\Add(\omega)*\dP([\check{\omega}_3]^{<\omega_1}\smallsetminus V[\Coll(\omega_1,\omega_2)])$$
	
	The idea behind this poset is as follows: $\Coll(\omega_1,\omega_2)$ collapses $\omega_2$ by adding a surjection from $\omega_1$ to $\omega_2$ such that all initial segments are in the ground model. However, the second poset collapses $\omega_3^V$ by adding a club subset of $[\omega_3^V]^{<\omega_1}$ which is disjoint from the ground model.
	
	To apply Fact \ref{FactWoodin}, we must show that $\dP$ preserves stationary subsets of $\omega_1$. For $\Coll(\omega_1,\omega_2)$, this holds because of the countable closure. $\MM$ implies $2^{\omega}=\omega_2$ and thus $\omega_2^{\omega}=\omega_2$. Ergo $|\Coll(\omega_1,\omega_2)|=\omega_2$ and the poset forces $\check{\omega}_3=\dot{\omega}_2$. Friedman and Krueger show the following in \cite[Proposition 4.3, Theorem 4.4]{FriedmanKruegerThinDisjoint}:
	
	\begin{myfact}[Friedman-Krueger]\label{FactFriedmanKrueger}
		$\Add(\omega)$ forces that $\dP([\omega_2]^{<\omega_1}\smallsetminus V)$ is ${<}\,\omega_1$-distributive. The two-step iteration $\Add(\omega)*\dP([\omega_2]^{<\omega_1}\smallsetminus V)$ preserves stationary subsets of $\omega_1$.
	\end{myfact}
	
	Since $\omega_3^V=\omega_2^{V[\Coll(\omega_1,\omega_2)]}$, $\dP$ indeed preserves stationary subsets of $\omega_1$.
	
	We will show that $\dP$ collapses $H(\omega_2)^V$ and $H(\omega_3)^V$ to have size $\omega_1$ in such a way that $H(\omega_2)^V$ becomes internally approachable and $H(\omega_3)^V$ does not become internally stationary.
	
	\begin{mylem}\label{PSeq}
		Assume $2^{\omega_2}=\omega_3$. $\dP$ forces the existence of $\subseteq$-increasing and continuous sequences $(a_i)_{i\in\omega_1}$ and $(b_i)_{i\in\omega_1}$ with the following properties:
		\begin{enumerate}
			\item $\bigcup_{i\in\omega_1}a_i=H(\omega_2)^V$ and for all $j<\omega_1$, $(a_i)_{i<j}\in[H(\omega_2)^V]^{<\omega_1}\cap H(\omega_2)^V$.
			\item $\bigcup_{i\in\omega_1}b_i=H(\omega_3)^V$ and for all $i<\omega_1$, $b_i\in[H(\omega_3)^V]^{<\omega_1}\smallsetminus H(\omega_3)^V$.
		\end{enumerate}
	\end{mylem}
	
	\begin{proof}
		Let $G$ be $\dP$-generic. Fix filters $H$, $I$ and $J$ such that $G=H*I*J$.
		
		We first prove (1): In $V$, $|H(\omega_2)|=|\omega_2^{\omega_1}|=\omega_2$ by $\MM$ (see e.g. \cite[Theorem 10]{ForemanMagidorShelahMM}). In $V[H]$, $|H(\omega_2)^V|=\omega_1$. Thus, let $F\colon\omega_1\to H(\omega_2)^V$ be a bijection in $V[H]$. Since $\Coll(\omega_1,\omega_2)$ is countably closed, $V^{<\omega_1}\cap V[H]\subseteq V$. In particular, for every $j<\omega_1$, $F[j]\in V$ and so $(F[i])_{i<j}\in V$. This clearly implies that the sequence $(a_i)_{i\in\omega_1}:=(F[i])_{i<\omega_1}$ has the desired properties.
		
		Now we prove (2): Let $F':=\bigcup J$. By a simple density argument, $F'$ is a $\subseteq$-increasing, continuous and cofinal function from $\omega_1$ into $[\omega_3^V]^{<\omega_1}\smallsetminus V[H]=[\omega_3^V]^{<\omega_1}\smallsetminus V$.
		
		In $V$, let $F''\colon\omega_3^V\to H(\omega_3)^V$ be a bijection. Then the sequence $(b_i)_{i\in\omega_1}:=(F''[F'(i)])_{i\in\omega_1}$ is again easily seen to be as required: For any $i\in\omega_1$, $F'(i)\notin V$. Since $F''\in V$ is a bijection, this implies $F''[F'(i)]\notin V$.
	\end{proof}
	
	This allows us to show Theorem \ref{Thm1}. Note that the assumptions of the theorem are consistent (relative to a supercompact cardinal) because $\MM$ can be forced assuming the existence of a supercompact cardinal (see \cite[Theorem 5]{ForemanMagidorShelahMM}) and is preserved by ${<}\,\omega_2$-directed closed forcing (see \cite[Theorem 4.3]{LarsonSepStatRefl})which can be used to force $2^{\omega_2}=\omega_3$.
	
	\begin{mysen}\label{MMThm}
		Assume $\MM$ and $2^{\omega_2}=\omega_3$. Then there exist stationarily many $N\in[H(\omega_3)]^{\omega_1}$ such that $N\cap H(\omega_2)$ is internally approachable and $N$ is not internally stationary.
	\end{mysen}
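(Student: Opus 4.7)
The plan is to apply $\MM$ to the forcing $\dP$ defined just before Lemma~\ref{Seq}, and extract the desired structure from the resulting pair $(N,G)$ via the names $\dot a,\dot b$ of that lemma. Fix $\Theta$ large enough that $\dP,\dot a,\dot b\in H(\Theta)$. Since $\dP$ preserves stationary subsets of $\omega_1$, $\MM$ produces stationarily many $N\in[H(\Theta)]^{\omega_1}$ with $\omega_1\subseteq N$, $\{\dP,\dot a,\dot b\}\subseteq N$, and a filter $G\subseteq\dP$ which is $\dP$-generic over $N$. By Fact~\ref{FactMenas}, the projections $M:=N\cap H(\omega_3)$ form a stationary subset of $[H(\omega_3)]^{\omega_1}$, and I claim each such $M$ witnesses the theorem.

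For the internal approachability of $M\cap H(\omega_2)$, the candidate witness is $(\dot a_i^G)_{i<\omega_1}$. Since $\dP\Vdash\dot a_i\in H(\omega_2)^V$, a density argument inside $N$ provides $p\in G\cap N$ forcing $\dot a_i=\check v$ for some specific $v\in H(\omega_2)^V\cap N$, so $\dot a_i^G\in M\cap H(\omega_2)$. Using $\omega_1\subseteq N$, any countable element of $N$ is entirely contained in $N$, so each $\dot a_i^G\subseteq M\cap H(\omega_2)$. That the sequence is continuous and increasing and that each initial segment $(\dot a_k^G)_{k<j}$ lies in $M\cap H(\omega_2)$ follow from the truth lemma for $G$-generic-over-$N$ applied to the forced statements of Lemma~\ref{Seq}; finally, a covering argument locating each $x\in N\cap H(\omega_2)$ inside some $\dot a_i^G$ gives $\bigcup_i\dot a_i^G=M\cap H(\omega_2)$.

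For the failure of internal stationarity of $M$, consider $c_i:=\dot b_i^G\cap M$. By the analogue of the above covering argument, $(c_i)_{i<\omega_1}$ is a continuous increasing filtration of $M$ by countable sets, so $C:=\{c_i:i<\omega_1\}$ is a club in $[M]^{<\omega_1}$. Since $\dP\Vdash\dot b_i\notin V$, no condition forces $\dot b_i=\check v$ for any $v\in V$, and the truth lemma immediately yields $\dot b_i^G\notin N$. The main obstacle is to strengthen this to $c_i\notin N$ (so that $C\cap M=\emptyset$): this requires a density argument that exploits the specific construction of $\dP([\check{\omega_3}]^{<\omega_1}\smallsetminus V)$, which adjoins a continuous club of countable subsets of $H(\omega_3)^V$ that are explicitly outside $V$. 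The idea is that for any candidate $c\in N$, conditions in $\dP$ can be extended to prescribe the $i$-th level of the third factor to contain some $y\in N\setminus c$ together with external material from $H(\omega_3)^V\setminus N$ (ensuring the resulting countable set lies outside $V$), thereby forcing $c_i\ne c$. Making this density precise --- in particular, reformulating it so that the relevant dense set lies in $N$, which is nontrivial because the natural parameter $\check N$ is not itself an element of $N$ --- is the technically most delicate step of the proof.
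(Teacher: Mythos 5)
Your setup (apply $\MM$ to $\dP$ in $H(\Theta)$ for $\Theta$ above $\omega_3$, project to $M=N\cap H(\omega_3)$ via Fact~\ref{FactMenas}, and read off the witnessing sequences from partial evaluations of the names of Lemma~\ref{Seq}) is exactly the paper's, and your treatment of the internal approachability of $M\cap H(\omega_2)$ is essentially complete. But the step you yourself flag as the ``main obstacle'' --- upgrading $\dot b_i^G\notin N$ to $c_i\notin N$ --- is precisely the heart of the theorem, and it is left as a gap. Worse, the route you sketch for closing it (a density argument that prescribes the $i$-th level of the third factor to contain points of $N\smallsetminus c$ plus ``external material'' from $H(\omega_3)^V\smallsetminus N$) is headed into exactly the wall you identify: any such dense set would need $N$ as a parameter, and $N\notin N$, so genericity over $N$ gives you nothing. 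There is no repair of that particular idea in the paper.

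The paper's actual argument sidesteps $N$ as a parameter entirely by quantifying over the individual candidates $x$: to show $b_i\neq x$ for a \emph{fixed} $x\in N\cap H(\omega_3)$, consider
$$D_x:=\{p\in\dP\;|\;\exists y\,(p\Vdash\check y\in\check x\smallsetminus\dot b_i\ \vee\ p\Vdash\check y\in\dot b_i\smallsetminus\check x)\}.$$
This set is dense because $\dot b_i$ is forced to lie outside $V$, hence forced to differ from $\check x$, and it is definable from $\dot b_i$ and $x$ alone --- both elements of $N$ --- so $D_x\in N$. Genericity over $N$ yields $p\in G\cap D_x\cap N$, and elementarity of $N$ yields a witness $y\in N$. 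Because $y\in N$, the disagreement between $\dot b_i$ and $x$ at $y$ survives the restriction to $N$: either $y\in x\smallsetminus b_i$ (no condition in the filter $G$ can force $\check y\in\dot b_i$ once $p$ forces the opposite) or $y\in b_i\smallsetminus x$. Hence $b_i\neq x$ for every $x\in N\cap H(\omega_3)$, i.e.\ $b_i\notin M$, and the club $\{b_i\;|\;i\in\omega_1\}$ avoids $[M]^{<\omega_1}\cap M$. Note also that this argument uses nothing about the internal structure of $\dP([\check{\omega_3}]^{<\omega_1}\smallsetminus V)$ beyond the single fact that $\dot b_i$ is forced not to be in $V$; the combinatorics of the club-shooting poset that you propose to exploit are not needed. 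A minor additional point: since $G$ is only generic over $N$, the notation $\dot b_i^G$ does not denote a genuine evaluation; you should work directly with the partial evaluation $b_i=\{x\in N\cap H(\omega_3)\;|\;\exists p\in G\;p\Vdash\check x\in\dot b_i\}$ as the paper does, and then the object to exclude from $N$ is $b_i$ itself rather than an intersection $\dot b_i^G\cap M$.
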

	
	\begin{proof}
		Fix names $(\dot{a}_i)_{i\in\omega_1}$ and $(\dot{b}_i)_{i\in\omega_1}$ for sequences exemplifying Lemma \ref{PSeq}. Let $C\subseteq[H(\omega_3)]^{\omega_1}$ be club and let $\Theta$ be large enough so that $\dP\in H(\Theta)$. Then $\{N\in[H(\Theta)]^{\omega_1}\;|\;N\cap H(\omega_3)\in C\wedge\omega_1\subseteq N\}$ is club in $[H(\Theta)]^{\omega_1}$ by Fact \ref{FactMenas}. By Fact \ref{FactWoodin} we can find $N\prec H(\Theta)$ with size $\omega_1$ such that the following holds:
		
		\begin{enumerate}
			\item $\dP,(\dot{a}_i)_{i\in\omega_1},(\dot{b}_i)_{i\in\omega_1}\in N$;
			\item there exists a $\dP$-generic filter $G$ over $N$;
			\item $\omega_1\subseteq N$;
			\item $N\cap H(\omega_3)\in C$.
		\end{enumerate}
		
		We will show that $N\cap H(\omega_2)$ is internally approachable and $N\cap H(\omega_3)$ is not internally stationary.
		
		To this end, for $i\in\omega_1$, let $a_i$ consist of all $x\in N\cap H(\omega_2)$ such that for some $p\in G\cap N$, $p\Vdash\check{x}\in\dot{a}_i$. Again for $i\in\omega_1$, let $b_i$ consist of all $x\in N\cap H(\omega_3)$ such that for some $p\in G\cap N$, $p\Vdash\check{x}\in\dot{b}_i$. We will show that the sequences $(a_i)_{i\in\omega_1}$ and $(b_i)_{i\in\omega_1}$ are as desired, starting with $(a_i)_{i\in\omega_1}$.
		
		\begin{myclaim}
			The following holds:
			\begin{enumerate}
				\item The sequence $(a_i)_{i\in\omega_1}$ is increasing and continuous;
				\item for every $i\in\omega_1$, $a_i\in[N\cap H(\omega_2)]^{<\omega_1}$;
				\item $\bigcup_{i\in\omega_1}a_i=N\cap H(\omega_2)$;
				\item for every $j\in\omega_1$, $(a_i)_{i<j}\in N\cap H(\omega_2)$.
			\end{enumerate}
		\end{myclaim}
		
		\begin{proof}
			We first show (1). Clearly, $a_i\subseteq N\cap H(\omega_2)$ by the definition. By Lemma \ref{PSeq}, $\dot{a}_i$ is forced to be countable, so we can let $\dot{f}$ be a name for a bijection from $\omega$ into $\dot{a}_i$. Whenever $x\in a_i$, there is a dense set $D_{x,i}$ of conditions forcing $\dot{f}(\check{n})=\check{x}$ for some $n\in\omega$. In particular, there is $p\in G\cap N$ forcing $\dot{f}(\check{n})=\check{x}$. Since $G$ is a filter, this $n$ is unique, which implies that $a_i$ is countable.
			
			Next up is (2). To see that the sequence is increasing, suppose that $i<j\in\omega_1$ and $x\in a_i$, witnessed by $p\in G\cap N$. Since $\Vdash\dot{a}_i\subseteq\dot{a}_j$ by Lemma \ref{PSeq}, $p$ also witnesses $x\in a_j$. To see that the sequence is continuous, suppose that $j\in\omega_1$ is a limit ordinal and $x\in a_j$, witnessed by $p\in G\cap D$. By Lemma \ref{PSeq}, the set $D_{x,j}$ consisting of all $r\in\dP$ such that $r\perp p$ or $r\leq p$ and $r\Vdash\check{x}\in\dot{a}_i$ for some $i<j$ is dense in $\dP$. By elementarity, $D_{x,j}\in N$ and so there is $r\in D_{x,j}\cap G\cap N$. Since $G$ is a filter, $r$ and $p$ are compatible, so we must have $r\leq p$ and $r\Vdash\check{x}\in\dot{a}_i$, i.e. $x\in a_i$.
			
			Now we show (3). To this end, let $x\in N\cap H(\omega_2)$. Let $D_x$ be the set of all $p\in\dP$ which force $\check{x}\in\dot{a}_i$ for some $i\in\omega_1$. By Lemma \ref{PSeq}, $D_x$ is dense in $\dP$ and it lies in $N$ by elementarity. Since $G$ is $\dP$-generic over $N$, there is $p\in D_x\cap G\cap N$. Thus $p$ witnesses that $x\in a_i$ for some $i\in\omega_1$.
			
			Lastly, we show (4). Let $j<\omega_1$. Let $D_j$ be the set of all $p\in\dP$ which force $(\dot{a}_i)_{i<j}=\check{x}$ for some $x\in H(\omega_2)$. Again be Lemma \ref{PSeq}, $D_j$ is dense in $\dP$. Furthermore, $D_j\in N$ by elementarity. Therefore, there is $p\in D_j\cap G\cap N$. The corresponding $x$ witnessing $p\in D_j$ is in $N$ as well by elementarity.
			
			We will show $x=(a_i)_{i<j}$. Since $p\Vdash\check{x}=(\dot{a}_i)_{i<j}$, $x$ is a sequence of length $j$ consisting of countable sets. We will show that $x(i)=a_i$ for every $i<j$. To this end, let $y\in x(i)$. Then $y\in N$, since $x(i)$ is countable and $p\Vdash\check{y}\in\dot{a}_i$. Thus, $y\in a_i$. On the other hand, let $y\in a_i$. Then there is $q\in G\cap N$ forcing $\check{y}\in\dot{a}_i$. Because $G$ is a filter and generic over $N$, there is $r\leq p,q$ in $G\cap N$. Then $r$ forces $\check{y}\in\check{x}(\check{i})$, which implies $y\in x(i)$.
		\end{proof}
		
		In summary, $(a_i)_{i\in\omega_1}$ witnesses that $N\cap H(\omega_2)$ is internally approachable. We now concern ourselves with $(b_i)_{i\in\omega_1}$.
		
		\begin{myclaim}
			The following holds:
			\begin{enumerate}
				\item $(b_i)_{i\in\omega_1}$ is $\subseteq$-increasing and continuous;
				\item for every $i\in\omega_1$, $b_i\in[N\cap H(\omega_3)]^{<\omega_1}$;
				\item $\bigcup_{i\in\omega_1}b_i=N\cap H(\omega_3)$;
				\item for every $i<\omega_1$, $b_i\notin N\cap H(\omega_3)$.
			\end{enumerate}
		\end{myclaim}
		
		\begin{proof}
			Points (1) through (3) follow using the same arguments as before. So we only need to show that (4) holds.
			
			So we show that $\{b_i\;|\;i\in\omega_1\}$ is disjoint from $N\cap H(\omega_3)$. To this end, let $i\in\omega_1$ and let $x\in N\cap H(\omega_3)$. Let $D_{x,i}$ be the set of all $p\in\dP$ such that for some $y$, $p\Vdash \check{y}\in\check{x}\triangle\dot{b}_i$. Then $D_{x,i}\in N$ by elementarity and $D_{x,i}$ is dense in $\dP$: Let $r\in\dP$ be arbitrary. By Lemma \ref{PSeq}, $\Vdash\dot{b}_i\neq\check{x}$. Thus $r$ forces that there is an element of $V$ which is either in $\dot{b}_i$ and not in $\check{x}$ or vice versa. Therefore we can find $p\leq r$ and $y$ such that $p\Vdash\check{y}\in\check{x}\triangle\dot{b}_i$.
			
			Ergo there is $p\in G\cap D_{x,i}\cap N$. The corresponding $y$ is in $N$ as well by elementarity. Then either $p\Vdash\check{y}\in\check{x}\smallsetminus\dot{b}_i$, in which case $y\in x\smallsetminus b_i$, as no element of $G$ can force $\check{y}\in\dot{b}_i$ or $p\Vdash\check{y}\in\dot{b}_i\smallsetminus\check{x}$, in which case $y\in b_i\smallsetminus x$. In summary, $b_i\neq x$. As $x$ was arbitrary, $b_i\notin N\cap H(\omega_3)$.
		\end{proof}
		
		In particular, $\{b_i\;|\;i\in\omega_1\}$ is club in $[N\cap H(\omega_3)]^{<\omega_1}$ and disjoint from $N\cap H(\omega_3)$, so $N\cap H(\omega_3)$ is not internally stationary.
	\end{proof}
	
	The last ingredient we need to answer Foreman's question is the following simple observation:
	
	\begin{mylem}\label{LemmaMapping}
		Let $M\prec H(\Theta)$ have size $\mu$ and let $X,Y\in M$ be such that $|X|=|Y|$. Then $M\cap[M\cap X]^{<\mu}$ is stationary in $[M\cap X]^{<\mu}$ if and only if $M\cap[M\cap Y]^{<\mu}$ is stationary in $[M\cap Y]^{<\mu}$ and $M\cap[M\cap X]^{<\mu}$ contains a club in $[M\cap X]^{<\mu}$ if and only if $M\cap[M\cap Y]^{<\mu}$ contains a club in $[M\cap Y]^{<\mu}$.
	\end{mylem}
	
	\begin{proof}
		By elementarity, $M$ contains a bijection $F$ between $X$ and $Y$. We just prove one statement (the others follow with the same arguments). Assume $M\cap[M\cap X]^{<\mu}$ is stationary in $[M\cap X]^{<\mu}$ and $c\subseteq[M\cap Y]^{<\mu}$ is a club. By elementarity, $F$ restricts to a bijection between $M\cap X$ and $M\cap Y$, so $\{F^{-1}[x]\;|\;x\in c\}$ is club in $[M\cap X]^{<\mu}$. Ergo there exists $x\in c$ with $F^{-1}[x]\in M$. But then $x=F[F^{-1}[x]]\in M\cap c$ as desired.
	\end{proof}
	
	\begin{mycol}
		Assume $\MM$ and $2^{\omega_2}=\omega_3$. Then there are stationarily many $N\in[H(\omega_3)]^{\omega_1}$ such that $N\cap[N\cap\omega_2]^{<\omega_1}$ is stationary in $[N\cap\omega_2]^{<\omega_1}$ and $N\cap[N\cap\omega_3]^{<\omega_1}$ is not stationary in $[N\cap\omega_3]^{<\omega_1}$.
	\end{mycol}
	
	\begin{proof}
		Let $C\subseteq[H(\omega_3)]^{\omega_1}$ be club. Using Fact \ref{FactMenas} and Theorem \ref{MMThm}, find $M\prec H(\Theta)$ for $\Theta$ large enough with $M\cap H(\omega_3)\in C$ such that $M\cap H(\omega_2)$ is internally approachable and $M\cap H(\omega_3)$ is not internally stationary. In particular, $M\cap H(\omega_2)$ is internally stationary.
		
		Since $M\cap[M\cap H(\Theta)]^{<\omega_1}=(M\cap H(\Theta))\cap[M\cap H(\Theta)]^{<\omega_1}$, it follows easily that $M\cap[M\cap H(\omega_2)]^{<\omega_1}$ is stationary in $[M\cap H(\omega_2)]^{<\omega_1}$ and $M\cap[M\cap H(\omega_3)]^{<\omega_1}$ is not stationary in $[M\cap H(\omega_2)]^{<\omega_1}$. So Lemma \ref{LemmaMapping} implies the desired result since $|H(\omega_2)|=\omega_2$ and $|H(\omega_3)|=\omega_3$.
	\end{proof}
	
	\section{Mitchell Forcing}
	
	We now focus on obtaining similar results at cardinals above $\omega_1$. For this purpose, we will use Mitchell forcing. The original Mitchell forcing was discovered by William Mitchell (see \cite{MitchellTreeProp}) and used to show that, consistently, $\omega_2$ can have the tree property. In more recent work, Krueger applied similar techniques (see e.g. \cite{KruegerMitchellStyle}) toward the separation of variants of internal approachability.
	
	Mitchell's forcing is a skillful combination of Cohen forcing and collapses. When constructing variants of Mitchell forcing, the following choice must be made: If collapses occur immediately in limit steps, inaccessible cardinals are collapsed in a ``fresh way'' -- i.e. every initial segment of the added surjection is in the relative ground model -- while, if collapses occur only after adding a Cohen real, the collapses of the inaccessible cardinals have the approximation property -- i.e. any added surjection necessarily has an initial segment which lies outside of the relative ground model. This implies that the former version forces $\AP_{\omega_1}$, while the latter version forces $\neg\AP_{\omega_1}$ (see e.g. \cite{CummingsFriedmanMagidorRinotSinapovaEightfold}).
	
	In this paper, we will use the former version. However, we will make sure that, while the inaccessible cardinal $\nu$ is collapsed in a fresh way, its successor $\nu^+$ is collapsed by a forcing with a suitable approximation property. This ensures that, later on, we obtain models such that $M\cap H(\omega_2)$ is internally approachable and $M\cap H(\omega_3)$ is not internally club.
	
	\begin{mydef}
		Let $\mu$ be regular uncountable and $\xi$ an ordinal. $\dM(\mu,\xi)$ consists of pairs $(p,q)$ such that the following holds:
		\begin{enumerate}
			\item $p\in\Add(\omega,\xi)$.
			\item $q$ is a ${<}\,\mu$-sized partial function on $\xi$ such that for all $\alpha\in\dom(q)$, $q(\alpha)$ is an $\Add(\omega,\alpha)$-name for an element of $\dot{\Coll}(\check{\mu},\check{\alpha})$.
		\end{enumerate}
		We let $(p',q')\leq(p,q)$ if
		\begin{enumerate}
			\item $p'\leq p$ in $\Add(\omega,\xi)$,
			\item $\dom(q')\supseteq\dom(q)$ and for all $\alpha\in\dom(q)$,
			$$p'\uhr\alpha\Vdash q'(\alpha)\leq q(\alpha)$$
		\end{enumerate}
	\end{mydef}
	
	For the rest of the paper, we fix a regular uncountable cardinal $\mu$. As is common when working with variants of Mitchell forcing, we explicitely define the quotient forcings of different instances:
	
	\begin{mydef}
		Let $\xi<\kappa$ be ordinals and let $G$ be an $\dM(\mu,\xi)$-generic filter. In $V[G]$, let $\dM(G,\mu,\kappa\smallsetminus\xi)$ consist of pairs $(p,q)$ such that the following holds:
		\begin{enumerate}
			\item $p\in\Add(\omega,\kappa\smallsetminus\xi)$.
			\item $q$ is a ${<}\,\mu$-sized partial function on $\kappa\smallsetminus\xi$ such that for all $\alpha\in\dom(q)$, $q(\alpha)$ is an $\Add(\omega,\alpha\smallsetminus\xi)$-name for an element of $\dot{\Coll}(\check{\mu},\check{\alpha})$.
		\end{enumerate}
		We let $(p',q')\leq(p,q)$ if
		\begin{enumerate}
			\item $p'\leq p$ in $\Add(\omega,\kappa\smallsetminus\xi)$,
			\item $\dom(q')\supseteq\dom(q)$ and for all $\alpha\in\dom(q)$,
			$$p'\uhr(\alpha\smallsetminus\xi)\Vdash q'(\alpha)\leq q(\alpha)$$
		\end{enumerate}
	\end{mydef}
	
	We also explicitely define a term ordering which enables us to obtain a projection analysis of our forcings:
	
	\begin{mydef}
		Let $\xi<\kappa$ be a ordinals.
		\begin{enumerate}
			\item Let $\dT(\mu,\kappa)$ consist of all $(p,q)\in\dM(\mu,\kappa)$ such that $p=\emptyset$, ordered as a suborder of $\dM(\mu,\kappa)$.
			\item Let $G$ be $\dM(\mu,\xi)$-generic. In $V[G]$, let $\dT(G,\mu,\kappa\smallsetminus\xi)$ consist of those $(p,q)\in\dM(\mu,\kappa)$ such that $p=\emptyset$, ordered as a suborder of $\dM(G,\mu,\kappa\smallsetminus\xi)$.
		\end{enumerate}
	\end{mydef}
	
	The following properties are standard, so we omit the arguments. For proofs, see e.g. \cite[Section 2]{LevineDisjointStatSeq}.
	
	\begin{mylem}\label{MOmnibus}
		Let $\xi<\kappa$ be ordinals such that $\kappa$ is an inaccessible cardinal.
		\begin{enumerate}
			\item $\dM(\mu,\kappa)$ is $\kappa$-Knaster.
			\item There is a projection from $\Add(\omega,\kappa)\times\dT(\mu,\kappa)$ onto $\dM(\mu,\kappa)$.
			\item If $G$ is $\dM(\mu,\xi)$-generic, in $V[G]$ there is a projection from $\Add(\omega,\kappa\smallsetminus\xi)\times\dT(G,\mu,\kappa\smallsetminus\xi)$ onto $\dM(G,\mu,\kappa\smallsetminus\xi)$.
			\item $\dT(\mu,\kappa)$ is ${<}\,\mu$-closed and $\dT(G,\mu,\kappa\smallsetminus\xi)$ is ${<}\,\mu$-closed in $V[G]$ whenever $G$ is $\dM(\mu,\xi)$-generic.
			\item For every $\delta\leq\kappa$, if $x\in [V[\Add(\omega,\delta)]]^{<\mu}\cap V[\dM(\mu,\delta)]$, then $x\in V[\Add(\omega,\delta)]$.
		\end{enumerate}
	\end{mylem}
	
	It follows that $\dM(\mu,\kappa)$ preserves all cardinals up to (and including) $\mu$. Furthermore, it is easy to see that $\dM(\mu,\kappa)$ collapses any cardinal between $\mu$ and $\kappa$, so it forces $\kappa=\mu^+$. In particular, we have the following (by a nice name argument and since $|\dM(\mu,\kappa)|=\kappa$):
	
	\begin{mycol}\label{CardCol}
		Let $\kappa$ be an inaccessible cardinal. 
		$\dM(\mu,\kappa)$ forces $2^{\omega}=2^{\mu}=\mu^+=\kappa$ and $2^{\kappa}=(2^{\kappa})^V$.
	\end{mycol}
	
	For our results, we also require certain properties for the quotient of $\dM(\mu,\kappa)$ and an initial segment $\dM(\mu,\xi)$. These results are provided by Lemma \ref{MOmnibus} together with the fact that, whenever $G$ is $\dM(\mu,\xi)$-generic, the poset $\dM(G,\mu,\kappa\smallsetminus\xi)$ acts as the quotient forcing (see e.g. \cite[Lemma 2.1.10]{FontanellaPhD}):
	
	\begin{mylem}\label{MDecomp}
		Let $\xi<\kappa$ be ordinals above $\mu$. There is a dense embedding from $\dM(\mu,\kappa)$ into $\dM(\mu,\xi)*\dM(\Gamma,\mu,\kappa\smallsetminus\xi)$.
	\end{mylem}
	
	\begin{proof}
		We first define the map $\iota$ and then show that it is indeed a dense embedding.
		
		Let $(p,q)\in\dM(\mu,\kappa)$. Let $\dot{q}_1$ be the following $\dM(\mu,\xi)$-name: It is forced that $\dom(\dot{q}_1)=\dom(q)\smallsetminus\xi$. For every $\alpha\in\dom(q)\smallsetminus\xi$, $q(\alpha)$ is an $\Add(\omega,\alpha)$-name for an element of $\dot{\Coll}(\check{\mu},\check{\alpha})$. Since $\Add(\omega,\alpha)$ is equivalent to $\Add(\omega,\xi)*\Add(\omega,\alpha\smallsetminus\xi)$, we may view $q(\alpha)$ as an $\Add(\omega,\xi)$-name for an $\Add(\omega,\alpha\smallsetminus\xi)$-name for an element of $\dot{\Coll}(\check{\mu},\check{\alpha})$. We let $\dot{q}_1(\check{\alpha})$ be this $\Add(\omega,\xi)$-name and note that $\dot{q}_1(\check{\alpha})$ is also an $\dM(\mu,\xi)$-name.
		
		Now we define
		$$\iota((p,q)):=((p\uhr\xi,q\uhr\xi),(p\uhr[\xi,\kappa),\dot{q}_1))$$
		It is evident that the image of $\iota$ is actually contained in $\dM(\mu,\xi)*\dM(\Gamma,\mu,\kappa\smallsetminus\xi)$ and that $\iota$ preserves the forcing order.
		
		So let us show that the image of $\iota$ is dense in $\dM(\mu,\xi)*\dM(\Gamma,\mu,\kappa\smallsetminus\xi)$. To this end, let $((p_0,q_0),\dot{x})\in\dM(\mu,\xi)*\dM(\Gamma,\mu,\kappa\smallsetminus\xi)$.
		
		Let $G$ be $\dM(\mu,\xi)$-generic containing $(p_0,q_0)$ and work in $V[G]$. Then $\dot{x}^G=(p_1,q_1)$, where $p_1\in\Add(\omega,\kappa\smallsetminus\xi)$ and $q_1$ is a ${<}\,\mu$-sized partial function on $\kappa\smallsetminus\xi$ such that for every $\alpha\in q_1$, $q_1(\alpha)$ is an $\Add(\omega,\alpha\smallsetminus\xi)$-name for an element in $\dot{\Coll}(\check{\mu},\check{\alpha})$. By the ccc of $\Add(\omega,\alpha\smallsetminus\xi)$, we may assume that for every $\alpha\in\dom(q_1)$, the name $q_1(\alpha)$ has size ${<}\,\mu$. In particular, be Lemma \ref{MOmnibus} (5), $q_1(\alpha)\in V[G_{\Add}]$, where $G_{\Add}$ is the $\Add(\omega,\xi)$-generic filter induced by $G$. Since $|q_1|<\mu$, $q_1\in V[G_{\Add}]$. Again by the ccc of $\Add(\omega,\xi)$, there is $y\in V$ such that $\dom(q_1)\subseteq y$. By extending $q_1$ if necessary, we may assume that $\dom(q_1)=y$.
		
		Thus, we can find a condition $(p_0',q_0')\in G$ extending $(p_0,q_0)$ which forces all of the preceding statements. This means that $(p_0',q_0')\Vdash\dot{x}=(\check{p}_1,\dot{q}_1)$, where $p_1\in\Add(\omega,\kappa\smallsetminus\xi)$ and $\dot{q}_1$ is an $\Add(\omega,\xi)$-name such that $(p_0',q_0')\Vdash\dom(\dot{q}_1)=\check{y}$.
		
		We are now ready to define a preimage. Let $p:=p_0'\cup p_1$ and let $q$ be a function with domain $\dom(q_0')\cup y$ defined as follows: For $\alpha\in\dom(q_0')$, $q(\alpha)=q_0'(\alpha)$. For $\alpha\in y$, $\dot{q}_1(\alpha)$ is an $\Add(\omega,\xi)$-name for an $\Add(\omega,\alpha\smallsetminus\xi)$-name for an element of $\dot{\Coll}(\check{\mu},\check{\alpha})$. We let $q(\alpha)$ be the $\Add(\omega,\alpha)$-name corresponding to $\dot{q}_1(\alpha)$.
		
		It now follows easily that $(p,q)\in\dM(\mu,\kappa)$ and $\iota(p,q)=((p_0',q_0'),(\check{p}_1,\dot{q}_1))$, which implies that $\iota(p,q)\leq((p_0,q_0),\dot{x})$ as desired.
	\end{proof}
	
	We record the following easy statement for later:
	
	\begin{mylem}\label{PartialEval}
		Assume $\xi$ is a limit ordinal. Then $\dM(\mu,\xi+1)$ is forcing equivalent to $\dM(\mu,\xi)*(\dot{\Coll}(\check{\mu},\check{\xi})\times\Add(\omega))$.
	\end{mylem}
	
	\begin{proof}
		Map $(p,q)\in\dM(\mu,\xi+1)$ to $((p,q\uhr\xi),(q(\xi),p(\xi))$. Since any condition in $\dot{\Coll}(\check{\mu},\check{\xi})^{\dM(\mu,\xi)}$ has size ${<}\,\mu$, it is actually in $V[\Add(\omega,\xi)]$ which makes it easy to see that the mapping is a dense embedding.
	\end{proof}
	
	Lastly, the following proposition highlights the reason why forcing with $\dM(\mu,\kappa)$ works as intended. For later applications, we are proving it in a stronger form, where we take the product of $\dM(\mu,\kappa)$ and some ${<}\,\mu$-closed poset which is $\nu$-Knaster for some $\nu<\kappa$.
	
	\begin{mypro}\label{MainProp}
		Suppose that $\nu<\kappa$ are inaccessible cardinals above $\mu$. Let $\dP$ be any ${<}\,\mu$-closed poset which is $\nu$-Knaster.
		
		Let $G\times H$ be an $\dM(\mu,\kappa)\times\dP$-generic filter and let, for $\xi<\kappa$ any ordinal, $G(\xi)$ be the $\dM(\mu,\xi)$-generic filter induced by $G$. Then the following holds:
		
		\begin{enumerate}
			\item In $V[G\times H]$, $\mu^+\in I[\mu^+]$.
			\item In $V[G\times H]$, the set $[(\nu^+)^V]^{<\mu}\cap V[G(\nu)]$ is stationary and co-stationary in $[(\nu^+)^V]^{<\mu}$.
		\end{enumerate}
	\end{mypro}
	
	\begin{proof}
		It is well-known (see e.g. \cite{CummingsFriedmanMagidorRinotSinapovaEightfold}) that in $V[G]$, $\mu^+\in I[\mu^+]$. By Lemma \ref{MOmnibus} and Easton's Lemma, again in $V[G]$, $\dP$ is ${<}\,\mu$-distributive and $\kappa=\mu^+$-cc. Therefore, forcing with $\dP$ over $V[G]$ does not change cofinalities. Ergo, any sequence witnessing $\mu^+\in I[\mu^+]$ in $V[G]$ still witnesses $\mu^+\in I[\mu^+]$ in $V[G\times H]$.
		
		For (2), define $\delta:=(\nu^+)^V$. Let us first note that, because of the ${<}\,\mu$-distributivity of $\dP$ in $V[G(\nu)]$, $[\delta]^{<\mu}\cap V[G(\nu)]=[\delta]^{<\mu}\cap V[G(\nu)\times H]$. We first show that the set is co-stationary.
		
		To this end, work in $V[G(\nu)\times H]$. We want to apply Fact \ref{FactKrueger} with $V[G(\nu)\times H]$ in lieu of $V$, $V[G(\nu+1)\times H]$ in lieu of $W$, $\mu$ in lieu of $\mu$ and $\delta$ in lieu of both $X$ and $\Theta$. To this end, note the following:
		\begin{enumerate}
			\item In $V[G(\nu+1)\times H]$, $\mu$ is a regular uncountable cardinal, since $\dP$ is ${<}\,\mu$-distributive in $V[G(\nu+1)]$ and $\dM(\mu,\nu+1)$ preserves $\mu$.
			\item By Lemma \ref{PartialEval}, $V[G(\nu+1)\times H]$ is an extension of $V[G(\nu)\times H]$ using the poset $(\Coll(\mu,\nu)\times\Add(\omega))^{V[G(\nu)]}$. In particular, $(\mu^+)^{V[G(\nu+1)\times H]}=(\nu^+)^V=\delta$.
			\item By the same argument, $V[G(\nu+1)\times H]\smallsetminus V[G(\nu)\times H]$ contains a real.
		\end{enumerate}
		
		Ergo, by Fact \ref{FactKrueger}, in $V[G(\nu+1)\times H]$, the set $S$ of all $N\in[H(\delta)^{V[G(\nu+1)\times H]}]^{<\mu}\cap\IA(\omega)$ with $N\cap\delta\notin V[G(\nu)\times H]$ is stationary in $[H(\delta)^{V[G(\nu+1)\times H]}]^{<\mu}$.
		
		By Lemma \ref{MDecomp}, $V[G\times H]$ is an extension of $V[G(\nu+1)\times H]$ using $\dM(G(\nu+1),\mu,\kappa\smallsetminus\nu+1)$, a poset which is a projection of $\Add(\omega,\kappa\smallsetminus\nu+1)\times\dT(G(\nu+1),\mu,\kappa\smallsetminus\nu+1)$. By Lemma \ref{MDecomp}, $\dT(G(\nu+1),\mu,\kappa\smallsetminus\nu+1)$ is ${<}\,\mu$-closed in $V[G(\nu+1)]$ and, by the ${<}\,\mu$-distributivity of $\dP$ in the same model, it is still ${<}\,\mu$-closed in $V[G(\nu+1)\times H]$.
		
		Ergo, by Fact \ref{FactKrueger2}, the stationarity of $S$ is preserved by forcing with $\dT(G(\nu+1),\mu,\kappa\smallsetminus\nu+1)$ over $V[G(\nu+1)\times H]$. The further forcing with $\Add(\omega,\kappa\smallsetminus\nu+1)$ also preserves the stationarity of $S$ by the ccc. Hence, by Fact \ref{FactProjection}, we can see that forcing with $\dM(G(\nu+1),\mu,\kappa\smallsetminus\nu+1)$ over $V[G(\nu+1)\times H]$ preserves the stationarity of $S$ and thus $S$ is stationary in $[H(\delta)^{V[G(\nu+1)\times H]}]^{<\mu}$ in $V[G\times H]$.
		
		Lastly, by Fact \ref{FactMenas}, this implies that the set $S':=\{N\cap\delta\;|\;N\in S\}$ is stationary in $[\delta]^{<\mu}$ in $V[G\times H]$. By the construction, $S'$ is disjoint from $V[G(\nu)\times H]$.
		
		The stationarity of $S$ follows in a very similar way: In $V[G(\nu)\times H]$, the set $S:=[H(\delta)^{V[G(\nu)\times H]}]^{<\mu}\cap\IA(\omega)$ is clearly stationary. Just like before, $V[G\times H]$ is an extension of $V[G(\nu)\times H]$ using a poset which is a projection of a product of a ${<}\,\mu$-closed poset and a ccc poset. Thus, $S$ is still stationary in $[H(\delta)^{V[G(\nu)\times H]}]^{<\mu}$ in $V[G\times H]$, which implies that $\{N\cap\delta\;|\;N\in S\}$ is stationary in $[\delta]^{<\mu}$. Since the latter set is contained in $V[G(\nu)\times H]$, we are done.
	\end{proof}
	
	\section{Different Levels of Internal Approachability}
	
	In this section, we prove a theorem that will allow us to obtain Theorem \ref{Thm2} and Theorem \ref{Thm3} as corollaries. The idea is to transfer the results of Proposition \ref{MainProp} to elementary submodels of $H(\Theta)$. In order to accomplish this, we need submodels of size ${<}\,\kappa$ such that $|M\cap\kappa^+|=(M\cap\kappa)^+$, since otherwise, the results of Proposition \ref{MainProp} would not apply. These models are provided by the following large cardinal property:
	
	\begin{mydef}
		Let $\kappa\leq\lambda$ be cardinals. $\Pr(\kappa,\lambda)$ states that for every $\Theta\geq\lambda$ there are stationarily many $N\in[H(\Theta)]^{<\kappa}$ such that:
		\begin{enumerate}
			\item $\nu:=N\cap\kappa$ is inaccessible,
			\item $[N\cap\lambda]^{<\nu}\subseteq N$,
			\item For every cardinal $\mu\in[\kappa^+,\lambda]\cap N$, $\otp(N\cap\mu)$ is a cardinal.
		\end{enumerate}
	\end{mydef}
	
	Variants of this property have been considered e.g. by Zeman in \cite{ZemanConStrength} or Krueger in \cite{KruegerDestroyStatSets}. For $\lambda=\kappa$, $\Pr(\kappa,\lambda)$ is equivalent to $\kappa$ being a Mahlo cardinal. For $\lambda>\kappa$ we have the following upper bound:
	
	\begin{mylem}
		Assume $\lambda>\kappa$ are cardinals such that $\kappa$ is $|H(\lambda)|$-supercompact. Then $\Pr(\kappa,\lambda)$ holds.
	\end{mylem}
	
	\begin{proof}
		Let $\Theta\geq\lambda$ and $C\subseteq[H(\Theta)]^{<\kappa}$ club. Then the set $D:=\{M\cap H(\lambda)\;|\;M\in C\}$ contains a club in $[H(\lambda)]^{<\kappa}$ by Fact \ref{FactMenas}.
		
		Let $j\colon V\to M$ be a $|H(\lambda)|$-supercompact embedding, i.e. $j(\kappa)>|H(\lambda)^V|$ and ${}^{|H(\lambda)^V|}M\subseteq M$. Ergo $j[H(\lambda)^V]\in M$ and is a member of $j(D)$. So there is $N\in j(C)$ with $N\cap j(H(\lambda)^V)=j[H(\lambda)^V]$. Then the following holds:
		\begin{enumerate}
			\item $N\cap j(\kappa)=\kappa$ is inaccessible in $M$;
			\item $[N\cap j(\lambda)]^{<\kappa}\subseteq N$: We have $N\cap j(\lambda)=j[\lambda]$. If $x\in[N\cap j(\lambda)]^{<\kappa}$, $x=j[y]$ for some $y\in[\lambda]^{<\kappa}$ (so $y\in H(\lambda)^V$). However, as $|y|<\kappa$, $x=j[y]=j(y)\in N$.
			\item For any cardinal $\mu\in[j(\kappa)^+,j(\lambda)]\cap N$, $\mu=j(\nu)$ for some $\nu\in H(\lambda)^V$, so $\nu$ is a cardinal by elementarity. It follows that $\otp(N\cap j(\nu))=\otp(j[\nu])=\nu$ which is a cardinal in $V$ and thus in $M$.
		\end{enumerate}
		
		So by elementarity there is $N\in C$ such that $\nu:=N\cap\kappa$ is inaccessible, $[N\cap\lambda]^{<\nu}\subseteq N$ and $\otp(N\cap\mu)$ is a cardinal for every $\mu\in[\kappa^+,\lambda]\cap N$.
	\end{proof}
	
	A better upper bound (however with considerably more work) is the $\lambda$-ineffability of $\kappa$.\footnote{For a sketch, see https://mathoverflow.net/a/466030/138274} We can now prove our multi-purpose Theorem:
	
	\begin{mysen}\label{DistOmnibus}
		Assume $\kappa$ is inaccessible, $\Pr(\kappa,\kappa^+)$ holds and $2^{\kappa}=\kappa^+$. Let $\mu<\kappa$ be regular and $\dP$ a poset satisfying the following:
		\begin{enumerate}
			\item $\dP$ is ${<}\,\mu$-closed;
			\item there is $\delta<\kappa$ such that $\dP$ is $\delta$-Knaster.
			\item $\dP\subseteq H(\kappa^+)$.
		\end{enumerate}
		After forcing with $\dM(\mu,\kappa)\times\dP$, the following holds:
		\begin{enumerate}
			\item $\mu^+\in I[\mu^+]$.
			\item For any sufficiently large $\Theta$ there are stationarily many $M\in[H(\Theta)]^{\mu}$ such that $\cf(M\cap\mu^+)=\mu$ and $M\cap[M\cap\mu^{++}]^{<\mu}$ is stationary but does not contain a club in $[M\cap\mu^{++}]^{<\mu}$.
		\end{enumerate}
	\end{mysen}
	
	\begin{proof}
		Denote $\dM:=\dM(\mu,\kappa)$. Let $G\times H$ be $\dM\times\dP$-generic. For any $\gamma<\kappa$, let $G(\gamma)$ be the $\dM(\mu,\gamma)$-generic filter induced by $G$.
		
		We note that, by Corollary \ref{CardCol}, $\kappa=\mu^+$ in $V[G]$. As in the proof of Proposition \ref{MainProp}, forcing with $\dP$ over $V[G]$ collapses no cardinals and thus $\kappa=\mu^+$ in $V[G\times H]$. Again by Proposition \ref{MainProp}, $\mu^+\in I[\mu^+]$ holds in $V[G\times H]$.
		
		So we only have to show the existence of stationarily many $N\in [H(\Theta)^{V[G\times H]}]^{\mu}$ such that $\cf(N\cap\mu^+)=\mu$ and $N\cap[N\cap\mu^{++}]^{<\mu}$ is stationary and co-stationary in $[N\cap\mu^{++}]^{<\mu}$.
		
		To this end, let $F\colon[H(\Theta)^{V[G\times H]}]^{<\omega}\to[H(\Theta)^{V[G\times H]}]^{<\mu}$ and let $\dot{F}$ be a name such that $\dot{F}^{G\times H}=F$. Let $\Theta'$ be so large that $\dot{F}\in H^V(\Theta')$ and apply $\Pr(\kappa,\kappa^+)$ to find $N\prec H^V(\Theta')$ such that:
		\begin{enumerate}
			\item $\dot{F},\dM(\mu,\kappa),\dP\in N$;
			\item $\nu:=N\cap\kappa$ is inaccessible;
			\item $[N\cap\kappa^+]^{<\nu}\subseteq N$;
			\item $\otp(N\cap\kappa^+)$ is a cardinal.
		\end{enumerate}
		It follows easily that $\otp(N\cap\kappa^+)=\nu^+$: By assumption, $\otp(N\cap\kappa^+)\geq\nu^+$. On the other hand, whenever $c$ is an initial segment of $N\cap\kappa^+$, $c=N\cap\alpha$ for some $\alpha\in N\cap\kappa^+$. Since $N$ contains a bijection between $\kappa$ and $\alpha$, $N\cap\alpha$ and $N\cap\kappa$ have the same size. Thus, every initial segment of $N\cap\kappa^+$ has ordertype ${<}\,\nu^+$, which implies $\otp(N\cap\kappa^+)\leq\nu^+$. Our desired model is $M:=N[G\times H]\cap H^{V[G\times H]}(\Theta)$. Note that $M$ is closed under $F$, since $\dot{F}^{G\times H}\in N[G\times H]$. The $\kappa$-cc of $\dM(\mu,\kappa)\times\dP$ implies:
		
		\begin{myclaim}
			$N[G\times H]\cap V=N$.
		\end{myclaim}
		
		\begin{proof}
			Let $\tau\in N$ be such that $\tau^G\in V$. In particular, there is a maximal antichain $A$ of conditions in $\dM(\mu,\kappa)\times\dP$ which force $\tau=\check{x}$ for some $x\in V$. By elementarity, we can assume that $A\in N$, which implies $A\subseteq N$, as $N\cap\kappa\in\kappa$ and $|A|\in N\cap\kappa$. Thus the unique element $(m,p)$ in $A\cap G\times H$ is a member of $N$ and, again by elementarity, so is the $x$ such that $(m,p)\Vdash\tau=\check{x}$. Since $\tau^{G\times H}=x$, we are done.
		\end{proof}
		
		Thus, in particular, $M\cap\mu^{++}=M\cap\kappa^+=N\cap\kappa^+$ and $M\cap\mu^+=\nu$, which has cofinality $\mu$ in $V[G\times H]$. The following claim will connect Proposition \ref{MainProp} and the desired ``approachability type of $M$'':
		
		\begin{myclaim}
			Let $x\in[N\cap\kappa^+]^{<\mu}$. Then $x\in N[G\times H]$ if and only if $x\in V[G(\nu)]$.
		\end{myclaim}
		
		\begin{proof}
			In one direction, assume that $x\in V[G(\nu)]$. Let $\tau$ be an $\dM(\mu,\nu)$-name for $x$. Since $\nu$ is inaccessible, $\dM(\mu,\nu)$ is $\nu$-cc and a subset of $N$. We may thus code $\tau$ as an element of $[N\cap\kappa^+]^{<\nu}$, which implies that $x=\tau^{G(\nu)}=\tau^{G\times H}\in N[G\times H]$.
			
			Now assume that $x\in N[G\times H]$. Since $\dP$ is ${<}\,\mu$-distributive in $V[G]$ and thus in $N[G]$ by elementarity, $x\in N[G]$. Let $\tau\in N$ be an $\dM(\mu,\kappa)$-name for $x$. Since $\dM(\mu,\kappa)$ is $\kappa$-cc, we can find such a name with the property that it has size ${<}\,\kappa$. However, as before, this implies that $\tau\subseteq N$. In particular, $\tau$ is actually an $\dM(\mu,\kappa)\cap N=\dM(\mu,\nu)$-name and thus $x=\tau^{G(\nu)}\in V[G(\nu)]$.
		\end{proof}
		
		Now we can finish the proof. By Proposition \ref{MainProp}, in $V[G\times H]$, the set $[(\nu^+)^V]^{<\mu}\cap V[G(\nu)]$ is stationary and co-stationary in $V[G\times H]$. Since $|N\cap\kappa^+|=\nu^+$ in $V$, this clearly implies that $[N\cap\kappa^+]^{<\mu}\cap V[G(\nu)]$ is stationary and co-stationary in $V[G\times H]$. By the previous claim, $[N\cap\kappa^+]^{<\mu}\cap V[G(\nu)]=[N\cap\kappa^+]^{<\mu}\cap N[G\times H]$, which is clearly equal to $[M\cap\kappa^+]^{<\mu}\cap M$. Thus, $M$ is exactly as desired.
	\end{proof}
	
	Now we obtain the two theorems as corollaries: The proof of Theorem \ref{Thm2} is a bit more involved because we also have to show that approachable points are connected to internally approachable structures (so we are showing one direction of Theorem \ref{CoxThm}).
	
	\begin{mysen}\label{Theorem2}
		Assume $\kappa$ is an inaccessible cardinal such that $\Pr(\kappa,\kappa^+)$ holds and $2^{\kappa}=\kappa^+$. Let $\mu<\kappa$ be regular. After forcing with $\dM(\mu,\kappa)$, the following holds:
		\begin{enumerate}
			\item $\mu^+\in I[\mu^+]$.
			\item There are stationarily many $N\in[H(\mu^{++})]^{\mu}$ such that $N\cap H(\mu^+)$ is internally approachable and $N$ is internally stationary but not internally club.
		\end{enumerate}
	\end{mysen}
	
	\begin{proof}
		We apply Theorem \ref{MOmnibus} with $\dP:=\{\emptyset\}$. Let $G\times H$ be $\dM(\mu,\kappa)\times\dP$-generic. In $V[G\times H]$, we have $\mu^+\in I[\mu^+]$, so fix a sequence $\vec{a}$ and a club $C\subseteq\mu^+$ witnessing this fact. In $V[G\times H]$, we still have $|H(\kappa^+)^{V[G\times H]}|=|2^{\kappa}|=\kappa^+$ because $|\dM(\mu,\kappa)|=\kappa$.
		
		Work in $V[G\times H]$. Let $D\subseteq[H(\mu^{++})]^{\mu}$ be any club. By Theorem \ref{DistOmnibus}, find $M\prec H(\Theta)$ for $\Theta$ sufficiently large such that $\vec{a}\in M$, $M\cap H(\mu^{++})\in D$, $M\cap\mu^+\in C$, $\cf(M\cap\mu^+)=\mu$ and $M\cap[M\cap\mu^{++}]^{<\mu}$ is stationary but does not contain a club in $[M\cap\mu^{++}]^{<\mu}$. By Lemma \ref{LemmaMapping}, since $|H(\mu^{++})|=\mu^{++}$, $M\cap[M\cap H(\mu^{++})]^{<\mu}$ is stationary but does not contain a club in $[M\cap H(\mu^{++})]^{<\mu}$. This shows easily that $M\cap H(\mu^{++})$ is internally stationary but not internally club.
		
		Since $M\cap\mu^+\in C$ and $\cf(M\cap\mu^+)=\mu$, there exists $A\subseteq M\cap\mu^+$ unbounded with ordertype $\mu$ such that any initial segment of $A$ is in $\{a_{\alpha}\;|\;\alpha\in M\cap\mu^+\}$. In particular, any initial segment of $A$ is in $M$.
		
		Let $f\colon\mu\to A$ be the increasing enumeration of $A$ and let $(f_{\gamma})_{\gamma\in[\mu,\mu^)}\in M$ be a sequence such that for each $\gamma\in[\mu,\mu^+)$, $f_{\gamma}\colon\mu\to\gamma$ is a bijection. Note that $f_{\gamma}\in M$ for $\gamma\in M$. Also let $F\colon \mu^+\to H(\mu^+)$ be a bijection in $M$. Now define the sequence $(b_i)_{i\in\mu}$ as follows: For $i\in\mu$, let
		$$b_i:=F\left[\bigcup_{\gamma\in f[i]}f_{\gamma}[i]\right]$$
		Note that $(b_i)_{i\in\mu}$ is $\subseteq$-increasing, continuous and consists of ${<}\,\mu$-sized sets. So we only need to show that the initial segments of $(b_i)_{i\in\mu}$ are in $M$ and that $\bigcup_{i\in\mu}b_i=M\cap H(\mu^+)$.
		
		For any $j<\mu$, $(b_i)_{i<j}$ is definable from an initial segment of $f$ which is itself definable from an initial segment of $A$ and parameters in $M$, so it is in $M$. The only thing left to show is that $\bigcup_{i<\mu}b_i=M\cap H(\mu^+)$. To this end, let $x\in M\cap H(\mu^+)$. By elementarity there is $\alpha\in M\cap\mu^+$ with $x=F(\alpha)$. Furthermore, there is $j\in\mu$ with $f(j)>\alpha$. Lastly, there is $i\in\mu$ with $\alpha=f_{f(j)}(i)$. Then $x\in b_{\max(i,j)+1}$.
	\end{proof}
	
	And we can obtain Theorem \ref{Thm3} with a less involved proof:
	
	\begin{mysen}
		Assume $\kappa$ is inaccessible, $\Pr(\kappa,\kappa^+)$ holds and $2^{\kappa}=\kappa^+$. Let $\mu<\kappa$ be regular. After forcing with $\dM(\mu,\kappa)\times\Add(\mu,\kappa^+)$, the following holds:
		\begin{enumerate}
			\item $\mu^+\in I[\mu^+]$.
			\item There does not exist a disjoint stationary sequence on $\mu^+$.
			\item There are stationarily many $N\in[H(\mu^+)]^{\mu}$ which are internally stationary but not internally club.
		\end{enumerate}
	\end{mysen}
	
	\begin{proof}
		Let $G\times H$ be $\dM(\mu,\kappa)\times\Add(\mu,\kappa^+)$-generic. In $V[G\times H]$, $\kappa=\mu^+$, $2^{\mu}=\mu^{++}$, and so $|H(\mu^+)^{V[G\times H]}|=\mu^{++}$.
		
		Work in $V[G\times H]$. By Theorem \ref{DistOmnibus} we have $\AP_{\mu}$ which implies by Theorem \ref{KruegerThm} that there does not exist a disjoint stationary sequence on $\mu^+$.
		
		Let $C\subseteq[H(\mu^+)]^{\mu}$ be any club. Let $\Theta$ be sufficiently large. Using standard arguments and applying Theorem \ref{DistOmnibus}, find $M\prec H(\Theta)$ such that $M\cap H(\mu^+)\in C$ and $M\cap[M\cap\mu^{++}]^{<\mu}$ is stationary but does not contain a club in $[M\cap\mu^{++}]^{<\mu}$. By Lemma \ref{LemmaMapping}, $M\cap[M\cap H(\mu^+)]^{<\mu}$ is stationary but does not contain a club in $[M\cap H(\mu^+)]^{<\mu}$. This easily shows that $M\cap H(\mu^+)$ is internally stationary but not internally club.
	\end{proof}
	
	\section{Closing Remarks and Open Questions}
	
	We have seen before that, in case $|H(\mu^+)|>\mu^+$, the notions of internal approachability do not characterize $\AP_{\mu}$ or the existence of a disjoint stationary sequence on $\mu^+$. This is due to the fact that, while $\AP_{\mu}$ and $\DSS(\mu^+)$ deal with subsets of ordinals, the notions of internal approachability deal with subsets of $H(\mu^+)$. In this section observe that, when slightly modifying the variants of internal approachability, we obtain a characterization of $\DSS(\mu^+)$ and $\AP_{\mu}$ without relying on the assumption $|H(\mu^+)|=\mu^+$.
	
	\begin{mydef}
		Let $M$ be a set with size $\mu$, where $\mu$ is a regular uncountable cardinal.
		\begin{enumerate}
			\item $M$ is \emph{$\On$-internally unbounded} if $M\cap[M\cap\On]^{<\mu}$ is unbounded in $[M\cap\On]^{<\mu}$,
			\item $M$ is \emph{$\On$-internally stationary} if $M\cap[M\cap\On]^{<\mu}$ is stationary in $[M\cap\On]^{<\mu}$,
			\item $M$ is \emph{$\On$-internally club} if $M\cap[M\cap\On]^{<\mu}$ contains a club in $[M\cap\On]^{<\mu}$,
			\item $M$ is \emph{$\On$-internally approachable} if there exists an increasing and continuous sequence $(x_i)_{i<\mu}$ of elements of $[M\cap\On]^{<\mu}$ such that $\bigcup_{i<\mu}x_i=M\cap\On$ and $(x_i)_{i<j}\in M$ for every $j<\mu$.
		\end{enumerate}
	\end{mydef}
	
	If $|H(\mu^+)|=\mu^+$ and $M\prec H(\Theta)$ for a sufficiently large $\Theta$, Lemma \ref{LemmaMapping} and its proof shows that $M\cap H(\mu^+)$ is internally unbounded (stationary; club; approachable) if and only if $M\cap H(\mu^+)$ is $\On$-internally unbounded (stationary; club; approachable). Our previous arguments show that this is in general not the case if $|H(\mu^+)|>\mu^+$.
	
	The proof of Krueger's result on the existence of a disjoint stationary sequence (see \cite[Theorem 6.5]{KruegerApplicMSI}) can be readily modified to show:
	
	\begin{mysen}
		Assume $\mu$ is a regular uncountable cardinal. Then the following are equivalent:
		\begin{enumerate}
			\item There exists a disjoint stationary sequence on $\mu^+$;
			\item there are stationarily many $N\in[H(\mu^+)]^{\mu}$ which are $\On$-internally unbounded but not $\On$-internally club.
		\end{enumerate}
	\end{mysen}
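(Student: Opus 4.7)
The argument modifies Krueger's proof of Theorem~\ref{KruegerThm}(2) by replacing \emph{internally club} with \emph{ordinal-internally club} throughout. In Krueger's original proof, the hypothesis $2^{\mu}=\mu^+$ is used precisely to identify, up to a club, elements of $[H(\mu^+)]^{\mu}$ with their ordinal parts (since $|H(\mu^+)|=\mu^+$ under that hypothesis). Since ``ordinal-internally club'' is intrinsically a statement about $N\cap\On$, this identification becomes unnecessary and the cardinal arithmetic assumption can be dropped.

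For direction $(2)\Rightarrow(1)$: let $T\subseteq[H(\mu^+)]^{\mu}$ be a stationary set of $N$ that are internally unbounded but not ordinal-internally club. Intersect $T$ with the club of $N\prec(H(\Theta),\in,\triangleleft)$ for a fixed well-order $\triangleleft$ and with $\mu\subseteq N$. By Fodor, restrict $T$ further so that $\otp(N\cap\mu^+)=\mu$ and $\cf(\alpha_N)=\mu$, where $\alpha_N:=\sup(N\cap\mu^+)$. Let $\pi_N\colon N\to\bar{N}$ be the Mostowski collapse and set $\mathcal{S}_{\alpha_N}:=[N\cap\mu^+]^{<\mu}\cap N$, viewed as a subset of $[\alpha_N]^{<\mu}$. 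Under $\pi_N$ this becomes $[\mu]^{<\mu}\cap\bar{N}$, and the hypotheses on $N$ translate to this being unbounded but not club in $[\mu]^{<\mu}$. From this (following Krueger) one deduces that $\mathcal{S}_{\alpha_N}$ is stationary in $[\alpha_N]^{<\mu}$. A final Fodor-type pressing-down argument on a suitable coding of $N$ yields a stationary $T'\subseteq T$ on which the family $(\mathcal{S}_{\alpha_N})_{N\in T'}$ is pairwise disjoint, producing the desired disjoint stationary sequence indexed by $\{\alpha_N\;|\;N\in T'\}$.

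For direction $(1)\Rightarrow(2)$: given $(\mathcal{S}_\alpha)_{\alpha\in S}$ and a function $F\colon[H(\mu^+)]^{<\omega}\to H(\mu^+)$, pick $\alpha\in S$ and build $N$ as the union of a continuous elementary chain $(M_i)_{i<\mu}$ with each $M_i$ closed under $F$, $M_i\in M_{i+1}$, $\mu\subseteq M_0$, and $(\mathcal{S}_\beta)_{\beta\in S}\in M_0$, arranged so that $\sup_{i<\mu}\sup(M_i\cap\mu^+)=\alpha$. Internal unboundedness is immediate since each $M_i\cap H(\mu^+)$ lies in $N$. For the failure of ordinal-internal clubness, suppose $D\subseteq[N\cap\mu^+]^{<\mu}\cap N$ is a club; the Menas lift
\[ D^*:=\{a\in[\alpha]^{<\mu}\;|\;a\cap(N\cap\mu^+)\in D\} \]
is club in $[\alpha]^{<\mu}$ and so meets $\mathcal{S}_\alpha$ at some $b$. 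Unpacking the construction (namely, that $N$ sees the whole sequence $(\mathcal{S}_\beta)_{\beta}$), one deduces that $b$ must also belong to $\mathcal{S}_{\alpha'}$ for some $\alpha'\in S\cap N$ with $\alpha'\neq\alpha$, contradicting disjointness.

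The main obstacle is the core step of $(2)\Rightarrow(1)$: deducing stationarity of $\mathcal{S}_{\alpha_N}$ in $[\alpha_N]^{<\mu}$ from only the fact that the Mostowski-collapsed set $[\mu]^{<\mu}\cap\bar{N}$ is unbounded but not club. This is precisely where Krueger's original proof uses $2^{\mu}=\mu^+$; our modification replaces that counting argument by a reflection argument on $\bar{N}$ viewed as a transitive set of size $\mu$ with sufficient elementarity, which is guaranteed by working with the ordinal part $N\cap\On$ rather than the full set $N$.
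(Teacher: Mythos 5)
Your high-level plan --- run Krueger's argument with ``ordinal-internally club'' in place of ``internally club'', so that the identification of structures with their ordinal traces via a bijection $\mu^+\to H(\mu^+)$ (and hence the hypothesis $2^\mu=\mu^+$) is never needed --- is indeed the modification the paper has in mind, but both directions of your sketch break down at their central steps. In $(2)\Rightarrow(1)$ you set $\mathcal{S}_{\alpha_N}:=[N\cap\mu^+]^{<\mu}\cap N$ and claim that ``unbounded but not club'' yields stationarity of this set. That inference is false: an unbounded, non-club subset of $[N\cap\mu^+]^{<\mu}$ need not be stationary, and its stationarity is precisely the assertion that $N$ is ordinal-internally \emph{stationary}, which is not among your hypotheses and consistently fails for the relevant $N$. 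What ``not ordinal-internally club'' actually gives is stationarity of the \emph{complement} $[N\cap\mu^+]^{<\mu}\setminus N$, and it is (a suitable lift of) this complement that must form $\mathcal{S}_{\alpha_N}$. Relatedly, disjointness cannot be rescued for the sets you defined: every bounded subset of $\mu$ of size ${<}\,\mu$ lying in two structures $N,N'$ (and there are many, since $\mu\subseteq N\cap N'$) belongs to both $\mathcal{S}_{\alpha_N}$ and $\mathcal{S}_{\alpha_{N'}}$, so no pressing-down on a coding of $N$ can separate them; the tagging of $\alpha_N$ has to be built into the definition of the sets themselves. (Also, $\otp(N\cap\mu^+)=\mu$ together with $\mu\subseteq N$ forces $N\cap\mu^+=\mu$, which holds for no $N$ in any club, since $\mu\in N$.)

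In $(1)\Rightarrow(2)$ the construction is self-defeating: if $(M_i)_{i<\mu}$ is continuous with $M_i\in M_{i+1}$ and $|M_i|<\mu$, then $\{M_i\;|\;i<\mu\}$ is a club in $[N]^{<\mu}$ contained in $N$, so the $N$ you build is internally club, hence ordinal-internally club --- the opposite of what is required. (Note also that $\mu\subseteq M_0$ is incompatible with $|M_0|<\mu$, and internal unboundedness needs the $M_i$ to be of size ${<}\,\mu$.) Consequently the club $D$ you try to refute genuinely exists, and the final step --- that $b\in D^*\cap\mathcal{S}_\alpha$ must lie in some $\mathcal{S}_{\alpha'}$ with $\alpha'\ne\alpha$ --- is unsubstantiated: $b$ need not belong to $N$ nor to any other $\mathcal{S}_{\alpha'}$, so no violation of disjointness arises. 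The actual argument must use the stationarity of $\mathcal{S}_\alpha$ to force, cofinally often, traces $a\in[N\cap\mu^+]^{<\mu}$ into the construction that are provably \emph{not} elements of $N$, while separately arranging internal unboundedness; your chain arranges only the latter.
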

	
	Also, a straightforward modification of Cox' proof of the folklore result relating the approachability to the distinction between internal unboundedness and approachability shows the following:
	
	\begin{mysen}
		Assume $\mu$ is a regular uncountable cardinal. Consider the following statements:
		\begin{enumerate}
			\item $\mu^+\notin I[\mu^+]$;
			\item There are stationarily many $N\in[H(\mu^+)]^{\mu}$ which are $\On$-internally unbounded but not $\On$-internally approachable.
		\end{enumerate}
		Then $(2)$ implies $(1)$. Moreover, if $(\mu^+)^{<\mu}=\mu^+$, $(1)$ implies $(2)$.
	\end{mysen}
	
	\begin{proof}
		Let us give a sketch of the argument. Assume $\vec{a}=(a_{\alpha})_{\alpha<\mu^+}$ and $D\subseteq\mu^+$ witness $\mu^+\in I[\mu^+]$. Let $C\subseteq[H(\mu^+)]^{\mu}$ consist of all those $M\cap H(\mu^+)$, where $M\prec H(\Theta)$ ($\Theta$ large enough), $\vec{a}\in M$, $D\in M$ and $M\cap\mu^+\in\mu^+$. $C$ contains a club by Fact \ref{FactMenas}. Let $M\cap H(\mu^+)\in C$. If $M$ is $\On$-internally unbounded, $\cf(M\cap\mu^+)=\mu$, since otherwise $M\cap H(\mu^+)$ would need to contain a ${<}\,\mu$-sized superset of an unbounded subset of $M\cap\mu^+$ which is impossible. By elementarity, $M\cap\mu^+\cap D$ is unbounded in $M\cap\mu^+$, so $M\cap\mu^+\in D$. Ergo there is $A\subseteq M\cap\mu^+$ unbounded with ordertype $\mu$ such that every initial segment of $A$ is in $\{a_{\alpha}\;|\;\alpha\in M\cap\mu^+\}$ and thus in $M$. Let $f\colon\mu\to A$ be its increasing enumeration and for $\gamma\in[\mu,\mu^+)$, let $f_{\gamma}\colon\mu\to\gamma$ be a bijection (for $\gamma\in M\cap\mu^+$, we can assume $f_{\gamma}\in M$). Now define $(b_i)_{i\in\mu}$ as follows: For $i\in\mu$, let
		$$b_i:=\bigcup_{\gamma\in f[i]}f_{\gamma}[i]$$
		It follows as in the proof of Theorem \ref{Theorem2} that $(b_i)_{i\in\mu}$ witnesses the $\On$-internal approachability of $M$.
		
		Now assume $(\mu^+)^{<\mu}=\mu^+$ and $\mu^+\notin I[\mu^+]$. Let $\vec{a}=(a_{\alpha})_{\alpha<\mu^+}$ enumerate $[\mu^+]^{<\mu}$. By assumption (and the fact that $\mu^+\cap\cof(<\mu)\in I[\mu^+]$ by \cite[Lemma 4.4]{ShelahApproachability}) the set of all $\gamma\in\mu^+\cap\cof(\mu)$ which are not approachable with respect to $\vec{a}$ is stationary in $\mu^+$.
		
		Let $D\subseteq[H(\mu^+)]^{\mu}$ be club and find a continuous sequence $(M_i)_{i\in\mu^+}$ of elements of $D$ such that $(M_i)_{i\leq j}\in M_{j+1}$ for every $j\in\mu^+$. It follows that the set $\{i\in\mu^+\;|\;M_i\cap\mu^+=i\}$ is club in $\mu^+$ and so there must be $i\in\mu^+$ such that $i=M_i\cap\mu^+$, $\cf(M_i\cap\mu^+)=\mu$ and $M_i\cap\mu^+$ is not approachable with respect to $\vec{a}$. It follows that $M_i$ is $\On$-internally unbounded: If $x\subseteq M_i\cap\On=M_i\cap\mu^+$ has size ${<}\,\mu$, $x\subseteq M_j$ for some $j<i$ and $M_j\in M_i$. Assume toward a contradiction that $M\cap H(\mu^+)$ is $\On$-internally approachable, witnessed by $(x_i)_{i<\mu}$. Then the set $A:=\{\sup(x_i)\;|\;i<\mu\}$ is unbounded in $M\cap\mu^+$ and has ordertype $\mu$. Furthermore, any initial segment of $A$ is definable from some $(x_i)_{i<j}$ for $j<\mu$ and thus in $M$. By elementarity, it is equal to some $a_{\alpha}$ for $\alpha<M\cap\mu^+$. So $M\cap\mu^+$ is approachable with respect to $\vec{a}$, a contradiction.
	\end{proof}
	
	We close with a few open questions. We have shown in Theorem \ref{Thm1} that one direction of Theorem \ref{CoxThm} and Theorem \ref{KruegerThm} can fail without the assumed cardinal arithmetic. We do not know if the same is true for the other direction:
	
	\begin{myque}
		Is it consistent that $\AP_{\mu}$ fails and there are club many $N\in[H(\mu^+)]^{\mu}$ which are either not internally unbounded or internally approachable? Is it consistent that $\DSS(\mu^+)$ holds and the collection of all $N\in[H(\mu^+)]^{\mu}$ which are internally unbounded but not internally club is nonstationary?
	\end{myque}
	
	Our other question concerns the ``improved version'' of Theorem \ref{CoxThm}, where we do not know whether this cardinal arithmetic assumption can be relaxed:
	
	\begin{myque}
		Is it consistent that $\mu^+\notin I[\mu^+]$ and the collection of all $N\in[H(\mu^+)]^{\mu}$ which are $\On$-internally unbounded but not $\On$-internally approachable is nonstationary (here necessarily $(\mu^+)^{<\mu}\geq\mu^{++}$)?
	\end{myque}
	
	\printbibliography
	
\end{document}